\tikzstyle{vertex}=[circle,draw=black,fill=black,inner sep=0,minimum size=3pt,text=white,font=\footnotesize]
\newtheorem{theorem}{Theorem}
\newtheorem*{conjecture*}{Conjecture}
\newtheorem{proposition}{Proposition}[section]
\newtheorem{lemma}[proposition]{Lemma}
\newtheorem{corollary}[proposition]{Corollary}
\newtheorem{claim}[proposition]{Claim}
\theoremstyle{remark}
\newtheorem*{remark*}{Remark}
\newcommand{\vs}{\vspace{3mm}}
\newcommand{\C}{\mathbb{C}}
\newcommand{\D}{\mathbb{D}}
\newcommand{\E}{\mathbb{E}}
\newcommand{\mc}{\mathcal}
\newcommand{\Prob}{\mathbb{P}}
\newcommand{\ep}{\epsilon}
\newcommand{\lam}{\lambda}
\newcommand{\sub}{\subseteq}
\newcommand{\ol}{\overline}
\newcommand{\wt}{\widetilde}
\DeclareMathOperator{\Real}{Re}
\DeclareMathOperator{\Imag}{Im}
\title{New Upper Bounds for Trace Reconstruction}
\author{Zachary Chase}
\thanks{The author is partially supported by Ben Green's Simons Investigator Grant 376201 and gratefully acknowledges the support of the Simons Foundation.}
\address{Mathematical Institute, Andrew Wiles Building, Radcliffe Observatory Quarter, Woodstock Road, Oxford OX2 6GG, UK}
\email{zachary.chase@maths.ox.ac.uk}
\date{September 7, 2020}
\begin{document}

\begin{abstract}
We show that any $n$-bit string can be recovered with high probability from $\exp(\widetilde{O}(n^{1/5}))$ independent random subsequences. 
\end{abstract}

\maketitle

\section{Introduction}

Given a string $x \in \{0,1\}^n$, a \textit{trace} of $x$ is a random string obtained by deleting each bit of $x$ with probability $q$, independently, and concatenating the remaining string. For example, a trace of $11001$ could be $101$, obtained by deleting the second and third bits. The goal of the trace reconstruction problem is to determine an unknown string $x$, with high probability, by looking at as few independently generated traces of $x$ as possible. 

\vs

More precisely, fix $\delta,q \in (0,1)$. Take $n$ large. For each $x \in \{0,1\}^n$, let $\mu_x$ be the probability distribution on $\cup_{j=0}^n \{0,1\}^j$ given by $\mu_x(w) = (1-q)^{|w|}q^{n-|w|}f(w;x)$, where $f(w;x)$ is the number of times $w$ appears as a subsequence in $x$, that is, the number of strictly increasing tuples $(i_0,\dots,i_{|w|-1})$ such that $x_{i_j} = w_j$ for $0 \le j \le |w|-1$. The problem is to determine the minimum value of $T = T_{q,\delta}(n)$ for which there exists a function $F: (\cup_{j=0}^n \{0,1\}^j)^T \to \{0,1\}^n$ satisfying $\Prob_{\mu_x^T}[F(U^1,\dots,U^T) = x] \ge 1-\delta$ for each $x \in \{0,1\}^n$ (where the $U^j$ denote the $T$ independent traces).

\vs

Supressing the dependence on $q$ and $\delta$, Holenstein, Mitzenmacher, Panigrahy, and Wieder \cite{hmpw} established an upper bound, that $\exp(\wt{O}(n^{1/2}))$ traces suffice. Nazarov and Peres \cite{nazarovperes} and De, O'Donnell, and Servedio \cite{dds} simultaneously obtained the (previous) best upper bound known, that $\exp(O(n^{1/3}))$ traces suffice. 

\vs

In this paper, we improve the upper bound on trace reconstruction to $\exp(\wt{O}(n^{1/5}))$.

\vspace{1mm}

\begin{theorem}\label{main}
For any deletion probability $q \in (0,1)$ and any $\delta > 0$, there exists $C > 0$ so that any unkown string $x \in \{0,1\}^n$ can be reconstructed with probability at least $1-\delta$ from $T = \exp(Cn^{1/5}\log^5 n)$ i.i.d. traces of $x$.
\end{theorem}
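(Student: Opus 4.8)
The standard approach (Nazarov–Peres, De–O'Donnell–Servedio) reduces trace reconstruction to a problem about complex polynomials: if $x, y \in \{0,1\}^n$ are distinct strings, one constructs a "summary statistic" from the traces — essentially an estimate of $\sum_j (x_j - y_j) w^j$ for $w$ on an arc of the unit circle — and the $n^{1/3}$ bound comes from the fact that a degree-$n$ polynomial with $\pm 1, 0$ coefficients cannot be too small on an arc of length $\sim n^{-1/3}$ around $1$. To break past $n^{1/3}$, the plan is to extract more refined information from the traces than a single bit-by-bit generating function.
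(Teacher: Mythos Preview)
What you have written is a correct diagnosis of the strategy but not a proof. You have identified that one must go beyond single-bit statistics, yet you have not specified \emph{which} refined statistics to use, nor supplied any of the analytic machinery that makes the $n^{1/5}$ exponent emerge. Concretely, the following essential pieces are entirely absent.

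First, the paper's refined statistic is a weighted count of occurrences of a fixed \emph{contiguous substring} $w$ of length $\approx 2n^{1/5}$ in $x$ and $y$. This is extracted from the traces via a multi-index generalization of the single-bit identity (Proposition~\ref{generalidentity}), which relates a computable function of the trace to $\sum_{k_0<\dots<k_{l-1}} 1_{x_{k_i}=w_i}\, z_0^{k_0}\prod_{i\ge 1} z_i^{k_i-k_{i-1}-1}$. You would need to state and prove such an identity and show how to specialize the $z_i$ so that only contiguous occurrences contribute (or are dominant).

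Second, the gain over $n^{1/3}$ comes from the fact that, for a clever choice of $w$ (via Robson's lemma on separating words), the polynomial $\sum_k [1_{x_{k+\cdot}=w}-1_{y_{k+\cdot}=w}]z^k$ has its nonzero coefficients $n^{1/5}$-separated. This sparsity is the whole point; without naming it you have no mechanism for improvement.

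Third, and most substantially, you need a new Littlewood-type lower bound: for polynomials in the sparse class $\mathcal{P}_n$ (constant term $1$, at most one other term of degree $<n^{1/5}$, the rest of degree $\ge n^{1/5}$), one has $\max_{|\theta|\le n^{-2/5}} |p(e^{i\theta})| \ge \exp(-Cn^{1/5}\log^5 n)$. This is Theorem~\ref{arclowerbound} and its proof occupies the bulk of the paper, using a carefully designed auxiliary polynomial $h$ and a Hadamard three-lines-type argument. Nothing in your proposal hints at this.

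Fourth, to work for all $q\in(0,1)$ one must control the variance of the trace statistic; this requires Narayanan's trick of choosing $z_1,\dots,z_{l-1}\in[1-2p,1]$ (Corollary~\ref{choosez0z1}), which in turn relies on a further Littlewood-type estimate on $[0,1]$.

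In short, your plan is pointed in the right direction, but every step that does actual work is missing.
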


\vspace{1mm}

Batu et al. \cite{bkkm} proved a lower bound of $\Omega(n)$, which was improved to $\wt{\Omega}(n^{5/4})$ by Holden and Lyons \cite{holdenlyons}, which was then improved to $\wt{\Omega}(n^{3/2})$ by the author \cite{chasetr}. 

\vs

A variant of the trace reconstruction problem requires one to, instead of reconstruct any string $x$ from traces of it, reconstruct a string $x$ chosen uniformly at random from traces of it. For a formal statement of the problem, see Section 1.2 of \cite{holdenlyons}. Peres and Zhai \cite{pereszhai} obtained an upper bound of $\exp(O(\log^{1/2} n))$ for $q < \frac{1}{2}$, which was then improved to $\exp(O(\log^{1/3} n))$ for all (constant) $q$ by Holden, Pemantle, Peres, and Zhai \cite{hppz}. 

\vs

Holden and Lyons \cite{holdenlyons} proved a lower bound for this random variant of $\wt{\Omega}(\log^{9/4}n)$, which was then improved by the author \cite{chasetr} to $\wt{\Omega}(\log^{5/2}n)$. 

\vs

Several other variants of the trace reconstruction problem have been considered. The interested reader should refer to \cite{bcfrs}, \cite{bcss}, \cite{drr}, \cite{cgor}, \cite{bls}, \cite{narayanan}, \cite{kmmp}, \cite{circulartr}.

\vs

In a previous version of this paper, we proved Theorem \ref{main} only for $q \in (0,\frac{1}{2}]$. Shyam Narayanan found a short argument extending our methods to get all $q \in (0,1)$. He kindly allowed us to use his argument in this paper. 

\vs

We made no effort to optimize the (power of the) logarithmic term $\log^5n$ in Theorem \ref{main}.

\vspace{1mm}

\section{Notation}

We index starting at $0$. For strings $w$ and $x$, we sometimes write $1_{x_{k+i} = w_i}$ as shorthand for $\prod_{i=0}^{|w|-1} 1_{x_{k+i} = w_i}$. Let $\mathbb{D} = \{z \in \C : |z| < 1\}$. For functions $f$ and $g$, we say $f = \wt{O}(g)$ if $|f| \le C|g|\log^C|g|$ for some constant $C$. The symbol $\E_x$ denotes the expectation under the probability distribution over traces generated by the string $x$. For a trace $U$, we define $U_j = 2$ for $j > |U|$; this is simply to make ``$U_j = 0$" and ``$U_j = 1$" both false. We use $0^0 := 1$. For a positive integer $n$, denote $[n] := \{1,\dots,n\}$. For a function $f$ and a set $E$, denote $||f||_E := \max_{z \in E} |f(z)|$. We say $A \sub \{0,\dots,n-1\}$ is \textit{$d$-separated} if distinct $a,a' \in A$ have $|a-a'| \ge d$.

\vspace{1mm}

\section{Sketch of Argument}

The upper bound of $\exp(O(n^{1/3}))$ was obtained by analyzing the polynomial $\sum_k [x_k-y_k]z^k$ whose value can be well enough approximated from a sufficient number of traces. In this paper, we analyze the polynomial $\sum_k [1_{x_{k+i} = w_i}-1_{y_{k+i} = w_i}]z^k$, for a well-chosen (sub)string $w$; its value can be well enough approximated from a sufficient number of traces, provided $q \le 1/2$. The benefit of this polynomial is that for certain choices of $w$, it is far sparser than the more general $\sum_k [x_k-y_k]z^k$. In the author's paper \cite{chasesw} improving the upper bound on the separating words problem, lower bounds were obtained for (the absolute value of) these sparser polynomials near $1$ on the real axis that were superior to those for the more general $\sum_k [x_k-y_k]z^k$. We use the methods developed in that paper and methods used in \cite{littlewoodcircle} to obtain superior lower bounds for points on a small arc of the unit circle centered at $1$.

\vspace{1.5mm}

\section{Proof of Theorem \ref{main}}

Fix $q \in (0,1)$, and let $p = 1-q$. The following `single bit statistics' identity was proven in \cite[Lemma 2.1]{nazarovperes}; in it, $U$ denotes a random trace of $x$. $$\E_x\left[p^{-1}\sum_{0 \le j \le n-1} 1_{U_j=1}\left(\frac{z-q}{p}\right)^j\right] = \sum_{0 \le k \le n-1} 1_{x_k=1} z^k.$$ We shall use a generalization of this identity to approximate a weighted count (by position) of subsequence appearances in $x$ rather than a weighted count (by position) of appearances of $1$. Choosing variables appropriately will recover a weighted count of \textit{(contiguous) substring} appearances in $x$. An unweighted version was used in \cite{cdlss}. 

\vspace{1mm}

\begin{proposition}\label{generalidentity}
For any $x \in \{0,1\}^n, l \ge 1, w \in \{0,1\}^l$, and $z_0,\dots,z_{l-1} \in \C$, we have $$\E_x\left[p^{-1} \sum_{j_0 < \dots < j_{l-1}} \left(\prod_{i=0}^{l-1} 1_{U_{j_i} = w_i}\right)\left(\frac{z_0-q}{p}\right)^{j_0}\left(\prod_{i=1}^{l-1} \left(\frac{z_i-q}{p}\right)^{j_i-j_{i-1}-1}\right)\right]$$ $$ = \sum_{k_0 < \dots < k_{l-1}} \left(\prod_{i=0}^{l-1} 1_{x_{k_i}=w_i}\right) z_0^{k_0}\left(\prod_{i=1}^{l-1} z_i^{k_i-k_{i-1}-1}\right).$$
\end{proposition}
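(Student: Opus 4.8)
The plan is to condition on the retention pattern of the deletion channel and to re-index the sum over positions of the trace $U$ by the corresponding source positions of $x$; after that the identity reduces to an elementary factorization of the expectation of a product of independent Bernoulli functionals, in direct parallel with the $l=1$, $w=1$ case of Nazarov--Peres recalled above.

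Concretely, let $b_0,\dots,b_{n-1}\in\{0,1\}$ be the independent indicators ``$x_k$ is retained'', so $\Prob[b_k=1]=p$, and set $S=\{k:b_k=1\}$. Since $U$ is read off from $x$ along $S$, the pairs $(j,U_j)$ with $0\le j\le |U|-1$ are precisely the pairs $\big(\,|S\cap[0,k)|,\ x_k\,\big)$ for $k\in S$. Hence summing over increasing tuples $j_0<\dots<j_{l-1}$ of trace positions with $U_{j_i}=w_i$ is the same as summing over increasing tuples $k_0<\dots<k_{l-1}$ of retained source positions with $x_{k_i}=w_i$, and under this bijection one has $j_0=|S\cap[0,k_0)|=\sum_{m<k_0}b_m$ and $j_i-j_{i-1}-1=|S\cap(k_{i-1},k_i)|=\sum_{k_{i-1}<m<k_i}b_m$ for $1\le i\le l-1$. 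Verifying these two identities — in particular that the ``$-1$'' correctly accounts for $k_{i-1}$ being itself retained — is the one place where care is needed. Granting them, the quantity inside $\E_x[\,\cdot\,]$ equals
$$p^{-1}\sum_{k_0<\dots<k_{l-1}}\Big(\prod_{i=0}^{l-1}b_{k_i}1_{x_{k_i}=w_i}\Big)\Big(\tfrac{z_0-q}{p}\Big)^{\sum_{m<k_0}b_m}\prod_{i=1}^{l-1}\Big(\tfrac{z_i-q}{p}\Big)^{\sum_{k_{i-1}<m<k_i}b_m}.$$

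Now interchange $\E_x$ with the finite sum. For a fixed tuple $k_0<\dots<k_{l-1}$ the summand is a product of functions of the pairwise disjoint blocks of variables $\{b_{k_0}\},\dots,\{b_{k_{l-1}}\}$, $\{b_m:m<k_0\}$, $\{b_m:k_0<m<k_1\},\dots,\{b_m:k_{l-2}<m<k_{l-1}\}$, so by independence its expectation factors over these blocks. Each $\E[b_{k_i}]=p$; and writing $\big(\tfrac{z-q}{p}\big)^{\sum b_m}=\prod\big(\tfrac{z-q}{p}\big)^{b_m}$ and using the key one-variable computation $\E\big[\big(\tfrac{z-q}{p}\big)^{b_m}\big]=q+p\cdot\tfrac{z-q}{p}=z$, the block over $[0,k_0)$ contributes $z_0^{k_0}$ and the block over $(k_{i-1},k_i)$ contributes $z_i^{\,k_i-k_{i-1}-1}$. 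Collecting these factors against the normalising power of $p$ in front produces exactly the stated right-hand side. The main obstacle is purely the index bookkeeping just described — relating $j_i$ to the count of survivors to its left, checking disjointness of the blocks, and tracking the power of $p$; an induction on $l$ (peeling off a letter of $w$) is less convenient here, since the suffix of $U$ past a given trace position depends on the unknown source position, whereas conditioning on the whole retention pattern sidesteps this entirely.
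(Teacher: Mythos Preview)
Your argument is correct and is a somewhat cleaner packaging than the paper's. The paper computes the left-hand side by expanding the expectation combinatorially: for each source tuple $k_0<\dots<k_{l-1}$ and each choice of trace positions it writes out the probability (a product of binomial coefficients times powers of $p$ and $q$) that precisely those $k_i$ survive and land at those trace positions, and then sums out the trace positions via the binomial theorem. You instead work directly with the retention indicators $b_m$ and the one-variable identity $\E\big[(\tfrac{z-q}{p})^{b_m}\big]=z$, letting independence do the factorisation for you. The two computations are of course the same underneath --- the paper's binomial-theorem step \emph{is} the evaluation of $\E\big[(\tfrac{z-q}{p})^{\sum_m b_m}\big]$ over a block of positions --- but your version makes the independence structure explicit and never writes down a binomial coefficient. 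One small remark: if you actually carry out the power-of-$p$ bookkeeping you flag as an obstacle, you find that $\prod_i \E[b_{k_i}]=p^{\,l}$ forces the prefactor to be $p^{-l}$ rather than the $p^{-1}$ printed in the statement; the paper's own proof silently makes the same correction in its first displayed line.
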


\begin{proof}
To ease with the proof and perhaps give the reader another perspective by ``writing out the products", we rewrite the identity we wish to prove as $$\E_x\left[p^{-l}\sum_{\substack{0 \le j \le n-1 \\ \Delta_1,\dots,\Delta_{l-1} \ge 1}} 1_{\wt{U}_j = w_0} 1_{\substack{\wt{U}_{j+\Delta_1+\dots+\Delta_i} = w_i \\ \forall 1 \le i \le l-1}} (\frac{z_0-q}{p})^j(\frac{z_1-q}{p})^{\Delta_1-1}(\frac{z_2-q}{p})^{\Delta_2-1}\dots (\frac{z_{l-1}-q}{p})^{\Delta_{l-1}-1}\right]$$ $$= \sum_{k_0 < \dots < k_{l-1}} 1_{x_{k_0} = w_0,\dots, x_{k_{l-1}} = w_{l-1}} z_0^{k_0}z_1^{k_1-k_0-1}z_2^{k_2-k_1-1}\dots z_{l-1}^{k_{l-1}-k_{l-2}-1}.$$ By basic combinatorics, the left hand side of the above is $$p^{-l}\sum_{j,\Delta_1,\dots,\Delta_{l-1}} \sum_{k_0 < \dots < k_{l-1}} 1_{\substack{x_{k_i} = w_i \\ \forall 0 \le i \le l-1}} {k_0 \choose j}{k_1-k_0-1 \choose \Delta_1-1}{k_2-k_1-1 \choose \Delta_2-1}\dots {k_{l-1}-k_{l-2}-1 \choose \Delta_{l-1}-1}$$ $$\hspace{45mm} \times p^{j+\Delta_1+\dots+\Delta_{l-1}+1}q^{k_{l-1}+1-(j+\Delta_1+\dots+\Delta_{l-1}+1)}$$ $$\hspace{49.3mm} \times (\frac{z_0-q}{p})^j(\frac{z_1-q}{p})^{\Delta_1-1}\dots (\frac{z_{l-1}-q}{p})^{\Delta_{l-1}-1}$$ $$ = \sum_{k_0 < \dots < k_{l-1}} 1_{\substack{x_{k_i} = w_i \\ \forall 0 \le i \le l-1}} \left(\sum_j {k_0 \choose j}(z_0-q)^j q^{k_0-j}\right)\left(\sum_{\Delta_1} {k_1-k_0-1 \choose \Delta_1-1} (z_1-q)^{\Delta_1-1}q^{k_1-k_0-1-(\Delta_1-1)}\right)$$ $$ \times \dots \times \left(\sum_{\Delta_{l-1}} {k_{l-1}-k_{l-2}-1 \choose \Delta_{l-1}-1} (z_{l-1}-q)^{\Delta_{l-1}-1}q^{k_{l-1}-k_{l-2}-1-(\Delta_{l-1}-1)}\right).$$ The binomial theorem finishes the proof. 
\end{proof}

\vs

Let $\mc{P}_n$ be the set of all polynomials\footnote{Throughout the paper, we omit floor functions when they don't meaningfully affect anything.} $p(z) = 1-\sigma z^d+\sum_{j = n^{1/5}}^n c_j z^j \in \C[z]$ with $1 \le d < n^{1/5}, \sigma \in \{0,1\}$, and $|c_j| \le 1$ for each $j$.

\vs

We prove the following theorem in the next section. We assume it to be true until then.

\vspace{1.5mm}

\begin{theorem}\label{arclowerbound}
There is some $C > 0$ so that for any $n \ge 2$ and any $p \in \mc{P}_n$, $$\max_{|\theta| \le n^{-2/5}} |p(e^{i \theta})| \ge \exp(-Cn^{1/5}\log^5n).$$
\end{theorem}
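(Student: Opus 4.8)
The plan is to argue by contradiction: suppose $p \in \mc{P}_n$ has $|p(e^{i\theta})| < \varepsilon := \exp(-Cn^{1/5}\log^5 n)$ for all $|\theta| \le n^{-2/5}$. The structure of $p$ is that it has a ``low-degree head'' $1 - \sigma z^d$ with $d < n^{1/5}$, separated by a gap from a ``tail'' $\sum_{j \ge n^{1/5}} c_j z^j$ with bounded coefficients. The point is that on the small arc near $1$, the tail cannot destroy the head: the tail $\sum_{j\ge n^{1/5}} c_j z^j$ behaves like $\frac{z^{n^{1/5}}}{1-z}$ in size, which is only of order $n^{2/5}$ on the arc $|1-e^{i\theta}| \asymp n^{-2/5}$, whereas I will want the head to force $|p|$ to be either bounded below by a constant or, after suitable manipulation, much larger. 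First I would reduce to understanding $1 - \sigma z^d$ on the arc: if $\sigma = 0$ then $p = 1 + (\text{tail})$ and $|p(1)| \ge 1 - \sum|c_j| \cdot (\text{something})$ — but actually the tail is not absolutely summable, so I need to be more careful and use the arc, not just the point $1$.

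The real engine should be the techniques cited in the sketch: the methods of \cite{chasesw} giving lower bounds for sparse-type polynomials near $1$ on the real axis, combined with the complex-analytic ideas of \cite{littlewoodcircle} to push these to a small arc of the unit circle. Concretely, I would use a Hadamard three-lines / Turán-type argument: consider $g(z) = p(z) (1-z)$, or more precisely multiply by a factor that tames the tail. The tail times $(1-z)$ becomes $z^{n^{1/5}} + \sum (c_{j+1}-c_j) z^{j+1}$, still not obviously small, so instead I would integrate: writing $p(z) = (1 - \sigma z^d) + z^{n^{1/5}} h(z)$ where $h(z) = \sum_{j\ge 0} c_{j+n^{1/5}} z^j$ has $|h(z)| \le \frac{1}{1-|z|}$. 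On a disc of radius $1 - n^{-2/5}$, this gives $|z^{n^{1/5}} h(z)| \lesssim n^{2/5}$, which is bounded by a power of $n$ — subexponential. So on that disc, $|p(z)| \le |1-\sigma z^d| + O(n^{2/5}) \le O(n^{2/5})$ trivially; the content is the \emph{lower} bound.

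For the lower bound, the key step is: the function $p(z)$, if tiny on the arc $\{e^{i\theta} : |\theta| \le n^{-2/5}\}$, must — by a quantitative maximum-principle / harmonic-measure estimate on the region bounded by that arc and a circular arc inside $\D$ of radius $1 - c n^{-2/5}$ — be small (say $\le \varepsilon^{c'} \cdot n^{O(1)}$) on a definite-size sub-arc of the inner circle, hence in particular at a point $z_0$ with $|z_0| = 1 - cn^{-2/5}$ and $|\arg z_0| \le C n^{-2/5}$. At such a point I evaluate $p(z_0) = (1 - \sigma z_0^d) + z_0^{n^{1/5}} h(z_0)$. Since $d < n^{1/5}$ and $|z_0 - 1| \lesssim n^{-2/5}$, we have $|z_0^d - 1| \lesssim d n^{-2/5} \le n^{-1/5}$, so $|1 - \sigma z_0^d| \ge 1 - n^{-1/5}$ when $\sigma = 1$ (and $=1$ when $\sigma = 0$), while $|z_0^{n^{1/5}} h(z_0)| \le |z_0|^{n^{1/5}}/(1-|z_0|) \le e^{-cn^{1/5}\cdot n^{-2/5}} \cdot n^{2/5} = e^{-c n^{-1/5}}n^{2/5}$ — hmm, that exponent $n^{-1/5} \to 0$, so this does not kill the tail. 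This tells me the gap threshold $n^{1/5}$ and the radius $n^{-2/5}$ are calibrated so the tail is only \emph{polynomially} controlled, and I must instead run the argument at many scales / use the full strength of the \cite{chasesw}--\cite{littlewoodcircle} machinery rather than a one-point evaluation.

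\textbf{The main obstacle.} The hard part is exactly this: the tail $\sum_{j \ge n^{1/5}} c_j z^j$ is genuinely large (polynomial in $n$) near $1$, so no crude triangle-inequality split works, and one must exploit cancellation structure — the honest argument is a Turán-type / Borwein--Erdélyi-style lower bound for polynomials with a bounded-coefficient tail, transplanted from the real segment near $1$ (where \cite{chasesw} works) to the arc via the conformal/subharmonic comparison of \cite{littlewoodcircle}. Getting the quantitative constants to line up so the output is $\exp(-Cn^{1/5}\log^5 n)$ rather than something larger is where all the work lives; the logarithmic powers presumably come from iterating the comparison over $O(\log n)$ dyadic scales in $|1-z|$.
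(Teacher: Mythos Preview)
Your proposal correctly isolates the difficulty but does not overcome it. You recognize that the head--tail split $p(z)=(1-\sigma z^d)+z^{n^{1/5}}h(z)$ fails because the tail is only polynomially controlled on the relevant region, and you then defer to ``the full strength of the \cite{chasesw}--\cite{littlewoodcircle} machinery'' without saying what that machinery actually is or how the quantitative output $\exp(-Cn^{1/5}\log^5 n)$ emerges. The harmonic-measure transfer you sketch (from the arc to a single interior point of radius $1-cn^{-2/5}$) is, as you yourself compute, a dead end: at such a point the tail is still of size $\approx n^{2/5}$ and swamps the head. Your closing guess that logarithmic factors arise from ``iterating over $O(\log n)$ dyadic scales in $|1-z|$'' is not how the argument runs.

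The missing idea is the construction of an auxiliary polynomial $h:\ol{\D}\to\ol{\D}$ of degree $\approx n^{1/5}$ with $h(0)=0$, designed so that (i) $h$ maps a small arc $\{e^{2\pi i t}:|t|\lesssim a\}$ (with $a=n^{-2/5}$) into a thin lens-shaped region $G_a$ sandwiched between the chord $I_0$ joining $e^{\pm ia}$ and the target arc $I_a$, and (ii) $|h(e^{2\pi i t})|\le 1-c|t|/\log^2(a^{-1})$ for $|t|\gtrsim a^{1/2}$. Given such an $h$, one sets $q(z)=(z-e^{ia})(z-e^{-ia})p(z)$ and forms the product $g(z)=\prod_{j=0}^{m-1}q\bigl(h(e^{2\pi i j/m}z)\bigr)$ with $m\asymp n^{2/5}$. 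Property (ii), together with the gap structure of $p$ (which lets one replace $p$ by its truncation $1-\sigma z^d$ up to a negligible error once $|h|\le 1-cn^{-1/5}\log^2 n$), yields the product upper bound $\prod_{j\notin\{0,m-1\}}|q(h(e^{2\pi i j/m}z))|\le\exp(Cn^{1/5}\log^5 n)$ uniformly on $\partial\D$. Property (i) places the two remaining factors in $G_a$, so $|g(z)|\le(\max_{G_a}|q|)^2\exp(Cn^{1/5}\log^5 n)$ on $\partial\D$; since $g(0)=q(0)^m=1$, the maximum principle forces $\max_{G_a}|q|\ge\exp(-Cn^{1/5}\log^5 n)$. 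Finally one passes from $G_a$ to the arc $I_a$ by a Hadamard three-lines inequality on the lens (this is the step from \cite{littlewoodcircle}), using that $|q|\le 3n^{2/5}$ on the chord $I_0$. The logarithmic powers come from the decay rate in (ii) and the choice of the cutoff $J_1\asymp n^{1/5}\log^4 n$ in the product, not from any dyadic iteration.
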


\vspace{1mm}

\begin{proposition}\label{choosez0}
For any distinct $x,y \in \{0,1\}^n$ with $x_i = y_i$ for all $0 \le i < 2n^{1/5}-1$, there are $w \in \{0,1\}^{2n^{1/5}}$ and $z_0 \in \{e^{i\theta} : |\theta| \le n^{-2/5}\}$ such that $$\left| \sum_k [1_{x_{k+i}=w_i}-1_{y_{k+i}=w_i}]z_0^k \right| \ge \exp(-Cn^{1/5}\log^5 n).$$
\end{proposition}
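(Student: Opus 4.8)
The plan is to reduce to Theorem~\ref{arclowerbound}: I will produce $w$ for which the polynomial $P_w(z):=\sum_k[1_{x_{k+i}=w_i}-1_{y_{k+i}=w_i}]z^k$, after division by $\pm z^{a}$ for a suitable $a$, is an element of $\mc{P}_n$, and then invoke that theorem. Let $m$ be the first index at which $x$ and $y$ differ; the hypothesis forces $m\ge 2n^{1/5}-1$, so with $l:=2n^{1/5}$ and $a:=m-l+1\ge0$ the length-$l$ window $\{a,\dots,m\}$ lies in $\{0,\dots,n-1\}$. By the symmetry $x\leftrightarrow y$ I may assume $x_m=1$, $y_m=0$. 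The candidate $w$ is the length-$l$ substring ending at position $m$ of one of $x,y$ (namely $w_i=x_{a+i}$ or $w_i=y_{a+i}$, $0\le i\le l-1$); which of the two is pinned down in the last step. Once $z^{-a}P_w\in\mc{P}_n$ (up to sign), Theorem~\ref{arclowerbound} yields $z_0=e^{i\theta_0}$ with $|\theta_0|\le n^{-2/5}$ and $|z_0^{-a}P_w(z_0)|\ge\exp(-Cn^{1/5}\log^5n)$; since $|z_0|=1$ this equals $|P_w(z_0)|$, which is the claim.

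To recognize $\pm z^{-a}P_w$ as an element of $\mc{P}_n$ I would check three things. (i) Every coefficient of $P_w$ lies in $\{-1,0,1\}$, being a difference of products of indicators, so the tail bound $|c_j|\le1$ is free. (ii) For $k<a$ an occurrence of $w$ spans only indices $<m$, where $x$ and $y$ agree, so it cancels in $P_w$; hence the lowest nonzero term of $P_w$ is at $z^a$, with coefficient $\pm1$ --- equal to $1$ when $w$ is built from $x$, since $w$ then occurs in $x$ at $a$ but, as $x_m\ne y_m$, not in $y$ at $a$. (iii) The coefficient of $z^{a+d}$ for $1\le d<n^{1/5}$ equals $1_{d\in R_x}-1_{d\in R_y}$, where $R_x$ (resp.\ $R_y$) is the set of $d\ge1$ such that the length-$l$ window of $x$ (resp.\ of $y$) beginning at $a+d$ equals $w$. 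A short computation shows $d\in R_x$ (with $d<n^{1/5}\le l-1$) forces $x_m=x_{m-d}$, which rules out $d\in R_y$ (it would give $y_m=x_{m-d}=x_m$); so $R_x$ and $R_y$ are disjoint on $[1,n^{1/5})$ and each offending coefficient is already $\pm1$. It remains to guarantee at most one offending coefficient, of value $-1$.

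This is the crux, and it is where Fine--Wilf and the relation $l=2n^{1/5}$ are used. Call a binary string \emph{periodic near $m$} if it has some period $<n^{1/5}$ on $\{a,\dots,m\}$. If one of the relevant recurrence sets had two elements $d_1<d_2$ in $[1,n^{1/5})$, the two copies of $w$ at $a+d_1,a+d_2$ overlap (as $d_2-d_1<l$) and hence force $w$ to have period $d_2-d_1$; since $w$ is a window of $x$ (or of $y$) ending at $m$, this says $x$ (or $y$) is periodic near $m$. So I would build $w$ from whichever of $x,y$ is \emph{not} periodic near $m$: then that string's self-recurrence set is empty on $[1,n^{1/5})$ (a self-recurrence is a short period of $w$, hence of that string on $\{a,\dots,m\}$), and the other string's recurrence set meets $[1,n^{1/5})$ in at most one point, so $\pm z^{-a}P_w$ (minus when $w$ comes from $y$, where the $z^a$-coefficient is $-1$) has the form $1-\sigma z^d+\sum_{j\ge n^{1/5}}c_jz^j$ with $\sigma\in\{0,1\}$, $1\le d<n^{1/5}$, $|c_j|\le1$, i.e.\ lies in $\mc{P}_n$. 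Finally, $x$ and $y$ cannot both be periodic near $m$: if they have periods $p_x,p_y<n^{1/5}$ on $\{a,\dots,m\}$, then the common block $x_a\cdots x_{m-1}=y_a\cdots y_{m-1}$, of length $l-1=2n^{1/5}-1\ge p_x+p_y$, has period $g:=\gcd(p_x,p_y)$ by Fine--Wilf; then $x_m=x_{m-p_x}=x_{m-g}$ and $y_m=y_{m-p_y}=y_{m-g}$, while $x_{m-g}=y_{m-g}$, contradicting $x_m\ne y_m$. The essential content is this dichotomy; the parts I would not carry out in detail are the routine index bookkeeping for the recurrence-versus-period statements and for the cancellation in the agreement region.
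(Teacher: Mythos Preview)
Your argument is correct and follows the same route as the paper: choose $w$ to be the length-$2n^{1/5}$ window of $x$ or $y$ ending at the first point of disagreement so that $\pm z^{-a}P_w\in\mc{P}_n$, then apply Theorem~\ref{arclowerbound}. The only difference is cosmetic---where the paper cites Robson's Lemmas~1 and~2 to obtain $n^{1/5}$-separated occurrences of $w$ in both strings, you supply the underlying Fine--Wilf dichotomy directly (at least one of $x,y$ has no short period on the window, and building $w$ from that string forces both recurrence sets to be small near $a$), which is exactly the content of those lemmas.
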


\begin{proof}
Let $i \ge 2n^{1/5}-1$ be the first index with $x_i \not = y_i$. Let $w' = x_{i-2n^{1/5}+1},\dots,x_{i-1}$. As used in \cite{chasesw}, Lemmas $1$ and $2$ of \cite{robson} imply that there is some choice $w \in \{w'0,w'1\}$ such that the indices $k$ for which $x_{k+i} = w_i$ for all $0 \le i \le 2n^{1/5}-1$ are $n^{1/5}$-separated, and such that the indices $k$ for which $y_{k+i} = w_i$ for all $0 \le i \le 2n^{1/5}-1$ are $n^{1/5}$-separated. Therefore, if $p(z) := \sum_k [1_{x_{k+i} = w_i}-1_{y_{k+i} = w_i}]z^k$, then $\epsilon \frac{p(z)}{z^m} \in \mc{P}_n$ for some $\epsilon \in \{-1,1\}$ and $0 \le m \le n-1$. Thus, by Theorem \ref{arclowerbound}, there is some $\theta \in [-n^{-2/5},n^{-2/5}]$ such that $\exp(-Cn^{1/5}\log^5 n) \le |\epsilon \frac{p(e^{i\theta})}{e^{ i m\theta}}| = |p(e^{ i \theta})|$. Take $z_0 = e^{i\theta}$. 
\end{proof}

\vs

In a previous version of this paper, we used Proposition \ref{generalidentity} with $z_1,\dots,z_{l-1} = 0$ and $z_0$ chosen according to Proposition \ref{choosez0} to prove Theorem \ref{main}, which only worked for $q \le 1/2$, since, for $q > 1/2$, the quantity $(-q/p)^{j_i-j_{i-1}}$ would be too large in magnitude (for $j_i-j_{i-1} \approx n$), leading to too large a variance to well-enough approximate $\sum_k [1_{x_{k+i}=w_i}-1_{y_{k+i}=w_i}]z_0^k$ with few traces. The idea of Shyam Narayanan was to choose $z_1,\dots,z_{l-1}$ close to $1$ so that $(\frac{z_i-q}{p})^{j_i-j_{i-1}}$ would no longer be too large in magnitude, while also keeping the right hand side of Proposition \ref{generalidentity} not too small. The following corollary, due to him, establishes the existence of such $z_1,\dots,z_{l-1}$. 

\vspace{1mm}

\begin{corollary}\label{choosez0z1}
For any distinct $x,y \in \{0,1\}^n$ with $x_i = y_i$ for all $0 \le i < l-1 := 2n^{1/5}-1$, there are $w \in \{0,1\}^l, z_0 \in \{e^{i\theta} : |\theta| \le n^{-2/5}\}$, and $z_1,\dots,z_{l-1} \in [1-2p,1]$ such that\footnote{We similarly abuse notation by writing $1_{x_{k_i} = w_i}$ to denote $\prod_{i=0}^{l-1} 1_{x_{k_i}=w_i}$.} $$\left|\sum_{k_0 < \dots < k_{l-1}} [1_{x_{k_i} = w_i}-1_{y_{k_i}=w_i}] z_0^{k_0}z_1^{k_1-k_0-1}\dots z_{l-1}^{k_{l-1}-k_{l-2}-1}\right| \ge \exp(-C'n^{1/5}\log^5 n).$$
\end{corollary}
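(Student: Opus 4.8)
The plan is to reduce to Proposition~\ref{choosez0} by restricting the left-hand side to the diagonal $z_1 = \cdots = z_{l-1} = t$ and then running a two-constants (harmonic measure) estimate on the unit disc. Let $w \in \{0,1\}^l$ and $z_0 = e^{i\theta_0}$ with $|\theta_0| \le n^{-2/5}$ be as supplied by Proposition~\ref{choosez0}, and set
\[
g(t) \;:=\; \sum_{k_0 < \cdots < k_{l-1}} \bigl[1_{x_{k_i}=w_i} - 1_{y_{k_i}=w_i}\bigr]\, z_0^{k_0}\, t^{\,k_{l-1}-k_0-(l-1)},
\]
which is a polynomial in $t$ and is exactly the left side of the Corollary with all of $z_1,\dots,z_{l-1}$ set equal to $t$ (the exponents telescope: $\sum_{i=1}^{l-1}(k_i-k_{i-1}-1) = k_{l-1}-k_0-(l-1)$). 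Since $0^0=1$ and $0^m=0$ for $m\ge 1$, only the contiguous tuples $k_i = k_0+i$ survive at $t=0$, so $g(0) = \sum_k [1_{x_{k+i}=w_i} - 1_{y_{k+i}=w_i}]\,z_0^k$, and hence $|g(0)| \ge \exp(-Cn^{1/5}\log^5 n)$ by Proposition~\ref{choosez0}.

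If $q \le \tfrac12$ then $0 \in [1-2p,1]$ and we are done by taking $z_1 = \cdots = z_{l-1} = 0$, so assume $q > \tfrac12$ and put $I := [1-2p,1]$, a non-degenerate segment inside $\overline{\D}$ with $0 \notin I$. First, bounding the number of increasing $l$-tuples and using $|z_0|=1$ gives $\|g\|_{\overline{\D}} \le \binom{n}{l} \le n^{l} =: M$, so $\log M = O(n^{1/5}\log n)$. Next, $\Omega := \D \setminus I$ is a simply connected domain containing $0$, with $\partial\Omega = \partial\D \cup I$; on the slit $I$ we have $|g| \le \|g\|_I$ and on the circle $|g| \le M$. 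By the two-constants theorem,
\[
\log |g(0)| \;\le\; \alpha \log \|g\|_I + (1-\alpha)\log M, \qquad \alpha := \omega(0, I, \Omega) \in (0,1),
\]
the harmonic measure of the slit at $0$. The crucial point is that $\alpha$ depends only on $q$ (the domain $\Omega$ and the point $0$ do), not on $n$, and $\alpha > 0$ since $I$ has positive capacity — equivalently, composing with a conformal map $\phi:\D\to\Omega$ with $\phi(0)=0$, $\alpha$ is the positive normalized length of the boundary arc lying over the slit. Rearranging and using $\log M \le 2n^{1/5}\log^5 n$,
\[
\log \|g\|_I \;\ge\; \frac{\log|g(0)| - \log M}{\alpha} \;\ge\; \frac{-(C+2)\,n^{1/5}\log^5 n}{\alpha} \;=:\; -C'\,n^{1/5}\log^5 n .
\]
Picking $t^* \in I$ with $|g(t^*)| \ge e^{-C'n^{1/5}\log^5 n}$ and taking $z_1 = \cdots = z_{l-1} = t^* \in [1-2p,1]$ makes the left side of the Corollary equal to $|g(t^*)|$, finishing the proof (for $n$ large; the remaining finitely many cases are trivial).

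The reason the (linear-in-$n$) degree of $g$ does no harm is that we never transport information about $g$ across a region on which it is controlled only up to a degree-dependent factor: the bound $M$ holds on the whole unit disc and is merely $\exp(\wt{O}(n^{1/5}))$, far below the target $\exp(n^{1/5}\log^5 n)$. I expect the only step needing genuine care to be the two-constants estimate — specifically, verifying that $\alpha=\alpha(q)$ is a positive constant independent of $n$, which one can do via the explicit conformal map of a slit disc or a crude barrier at $0$; the coefficient bound and the identification of $g(0)$ are routine.
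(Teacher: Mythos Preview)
Your argument is correct and follows the same overall route as the paper: set $z_1=\dots=z_{l-1}=t$, observe that the resulting one-variable polynomial $g(t)$ has $g(0)$ equal to the contiguous-substring sum from Proposition~\ref{choosez0}, and then push the lower bound at $0$ to a lower bound on $\max_{t\in[1-2p,1]}|g(t)|$. The only real difference is the analytic inequality used for that last step. The paper normalizes by $\binom{n}{2n^{1/5}}$ so that $f(z_1):=\binom{n}{2n^{1/5}}^{-1}g(z_1)$ has all coefficients bounded by~$1$, and then invokes Theorem~5.1 of Borwein--Erd\'elyi--K\'os \cite{littlewood01}, which gives $\|f\|_{[1-2p,1]}\ge |f(0)|^{c_1/(2p)}e^{-c_2/(2p)}$; multiplying back by $\binom{n}{2n^{1/5}}=\exp(O(n^{1/5}\log n))$ yields the corollary. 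Your two-constants/harmonic-measure argument on the slit disc $\D\setminus[1-2p,1]$ is equally valid, more self-contained (no black-box Littlewood-type result needed), and makes the $q$-dependence of the constant transparent via $\alpha=\omega(0,[1-2p,1],\Omega)$. Either way the exponent lost is $O(n^{1/5}\log^5 n)$, so the two approaches are interchangeable here.
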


\begin{proof}
Let $w$ and $z_0$ be those guaranteed by Proposition \ref{choosez0}. Let $$f(z_1) = {n \choose 2n^{1/5}}^{-1}\sum_{k_0 < \dots < k_{l-1}} [1_{x_{k_i}=w_i}-1_{y_{k_i}=w_i}] z_0^{k_0}z_1^{k_{l-1}-k_0-(l-1)}.$$ Note that $f$ is a polynomial in $z_1$ with each coefficient trivially upper bounded by $1$ in absolute value. Therefore, by Theorem 5.1 of \cite{littlewood01}, \begin{align*} {n \choose 2n^{1/5}}\max_{z_1 \in [1-2p,1]} |f(z_1)| &\ge {n \choose 2n^{1/5}} |f(0)|^{c_1/(2p)}e^{-c_2/(2p)} \\ &\ge {n \choose 2n^{1/5}} \left({n \choose 2n^{1/5}}^{-1}\exp(-Cn^{1/5}\log^5 n)\right)^{c_1/(2p)}e^{-c_2/(2p)} \\ &\ge \exp(-C'n^{1/5}\log^5 n).\end{align*} The corollary then follows by taking a $z_1$ realizing this maximum and then setting $z_2,\dots,z_{l-1}=z_1$.
\end{proof}

\vspace{1.5mm}

We are now ready to establish our main theorem. We encourage the reader to first read the proof of the $\exp(O(n^{1/3}))$ upper bound in \cite{nazarovperes}. 

\vspace{1mm}

\begin{proof}[Proof of Theorem \ref{main}]
Take distinct $x,y \in \{0,1\}^n$. If $x_i \not = y_i$ for some $i < 2n^{1/5}-1$, then, by Lemma 4.1 of \cite{pereszhai}, $x$ and $y$ can be distinguished with high probability with $\exp(O(n^{1/15})) \le \exp(C''n^{1/5}\log^5 n)$ traces\footnote{Alternatively, one may simply ``make life harder" by adding enough $0$s, say, to the start of $x$ and $y$.}. So suppose otherwise. Let $w,z_0,z_1,\dots,z_{2n^{1/5}-1}$ be those guaranteed by Corollary \ref{choosez0z1}. Since $z_1,\dots,z_{2n^{1/5}-1} \in [1-2p,1]$, each of $\frac{z_i-q}{p}$, $1 \le i \le 2n^{1/5}-1$, is between $-1$ and $1$, and so the expression in brackets in Proposition \ref{generalidentity} has magnitude upper bounded by $n|\frac{z_0-q}{p}|^n 2^{2n^{1/5}}$, which, by the choice of $z_0$, is upper bounded by $n\exp(C\frac{n}{n^{4/5}})2^{2n^{1/5}}$ (see \cite[(2.3)]{nazarovperes} for details). Therefore, since the expression in brackets in Proposition \ref{generalidentity} is a function of just the observed traces, by Corollary \ref{choosez0z1} and a standard Höeffding inequality argument (see \cite{nazarovperes} for details; note the pigeonhole is not necessary), we see that $\exp(C'''n^{1/5}\log^5 n)$ traces suffice to distinguish between $x$ and $y$. As explained in \cite{nazarovperes}, this ``pairwise upper bound" in fact suffices to establish Theorem \ref{main}. 
\end{proof}

\vspace{2mm}

\section{Proof of Theorem \ref{arclowerbound}}

\vspace{1mm}

We may of course assume $n$ is large.

\vs

Let $a = n^{-2/5}$ and $r = a^{-1/2}$. Let $r_* \in [r]$ be such that $$\sum_{j=1}^{r_*} \frac{1}{\log^2(j+3)}-\sum_{j=r_*+1}^r \frac{1}{\log^2(j+3)} \in [20,21];$$ such an $r_*$ clearly exists. Let $$\begin{cases} \epsilon_j = +1 & \text{if }  1 \le j \le r_* \\ \epsilon_j = -1 & \text{if } r_*+1 \le j \le r\end{cases}.$$ Let $\lambda_a \in (1,2)$ be such that $$\sum_{j=1}^r \frac{\lambda_a}{j^2\log^2(j+3)} = 1.$$ Let $$d_j = \frac{\lambda_a}{j^2\log^2(j+3)}.$$ Define $$\wt{h}(z) = \wt{\lambda}_a\sum_{j=1}^r \epsilon_j d_j z^j,$$ where $\wt{\lambda}_a \in (1,2)$ is such that $\wt{h}(1) = 1$. Define $$h(z) = (1-a^{10})\wt{h}(z).$$ Let $$\alpha = e^{ia}, \beta = e^{-ia},$$ and $$I_t = \{z \in \C : \arg(\frac{\alpha-z}{z-\beta}) = t\}$$ for $t \ge 0$. Note that $I_0$ is the line segment connecting $\alpha$ and $\beta$ and $I_a = \{e^{i \theta} : |\theta| \le a\}$ is the set on which we wish to lower bound $p$ at some point. Let $$G_a = \{z \in \C : \arg(\frac{\alpha-z}{z-\beta}) \in (\frac{a}{2},a)\}$$ be the open region bounded by $I_{a/2}$ and $I_a$. 

\vs

As in \cite{chasesw}, we needed our choice of $h$ to satisfy (i) $|h(e^{2\pi i t})| \le 1-c|t|$ for $|t| > a^{1/2}$ (up to logs). In this paper, we need (ii) $|h(e^{2\pi i t})| \ge 1-Ca^2$ for $|t| \approx a$; in \cite{chasesw}, we instead had $|h(e^{2\pi i t})| \approx 1-a$ for $|t| \approx a$. Some thought shows that a polynomial with positive coefficients will not work. We therefore had roughly half of our coefficients be $-1$ so that (ii) holds; changing those coefficients doesn't affect (i) since the corresponding degrees are large. However, due to our required normalization that $h(1)$ is basically $1$, the negative coefficients make it so that $h$ might no longer map into the unit disk, which is highly problematic for later application. Luckily, though, $\wt{h}$, and thus $h$, \textit{does} map into the unit disk. We prove that in the appendix.

\begin{lemma}\label{unitdisk}
For any $t \in [-\pi,\pi]$, $\wt{h}(e^{it}) \in \ol{\D}$. 
\end{lemma}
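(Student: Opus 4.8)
The plan is to show that $\wt h$ maps the unit circle into $\ol{\D}$ by bounding $|\wt h(e^{it})|$ directly, separating the analysis according to how close $t$ is to $0$. Recall $\wt h(z) = \wt\lambda_a \sum_{j=1}^r \epsilon_j d_j z^j$ with $d_j = \lambda_a / (j^2\log^2(j+3))$, where $\sum_{j=1}^r d_j = 1$ and $\wt\lambda_a \in (1,2)$ is the normalization making $\wt h(1) = 1$; in particular $\wt\lambda_a = (\sum_j \epsilon_j d_j)^{-1}$, and the defining property of $r_*$ forces $\sum_j \epsilon_j d_j$ to be a fixed constant in a range like $[20\lambda_a/\log^2(\cdot),\,\dots]$ — more to the point, $\wt\lambda_a$ is bounded. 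So it suffices to prove $\big|\sum_{j=1}^r \epsilon_j d_j e^{ijt}\big| \le \wt\lambda_a^{-1} = \sum_{j=1}^r \epsilon_j d_j$ for all $t$, i.e. that the value at $t=0$ is the maximum of the modulus. Since the coefficients $d_j$ are \emph{not} all of the same sign, this is not automatic and is exactly the content of the lemma.

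The key steps I would carry out are: (1) For $t$ bounded away from $0$, say $|t| \ge$ some threshold, use the fact that $\sum_j d_j |e^{ijt} - 1|$ is small — this is the same ``(i)''-type estimate referenced in the text, exploiting that the mass $\sum_j d_j$ on indices $j \gtrsim 1/|t|$ is tiny (it is $\sum_{j \gtrsim 1/|t|} \lambda_a/(j^2\log^2(j+3)) = O(|t|/\log^2(1/|t|))$) while for small $j$ one has $|e^{ijt}-1| \le j|t|$; summing, $\sum_j d_j|e^{ijt}-1|$ is controlled, but this only gives closeness to $\sum_j \epsilon_j d_j e^{i j \cdot 0}$... actually for the unit-disk claim the clean route is: $|\wt h(e^{it})| \le \wt\lambda_a\sum_j d_j = \wt\lambda_a$, which is $> 1$, so the trivial bound is not enough and we genuinely need cancellation. (2) Therefore I would instead split $\sum \epsilon_j d_j e^{ijt} = \sum_{j \le r_*} d_j e^{ijt} - \sum_{j > r_*} d_j e^{ijt}$ and use that $r_* = \Theta(r) = \Theta(a^{-1/4})$ is large, so on the range $|t| \le a$ (the range that matters for the later application) \emph{both} partial sums have all their terms $e^{ijt}$ within angle $jt = O(r a) = O(a^{3/4})$ of $1$; a second-order Taylor expansion $e^{ijt} = 1 + ijt - \tfrac12 j^2 t^2 + O(j^3|t|^3)$ then gives $\wt h(e^{it}) = 1 + i t\,\wt\lambda_a\sum_j \epsilon_j d_j j - \tfrac{t^2}{2}\wt\lambda_a \sum_j \epsilon_j d_j j^2 + (\text{error})$, and taking modulus one finds $|\wt h(e^{it})|^2 = 1 + t^2\big[(\wt\lambda_a\sum \epsilon_j d_j j)^2 - \wt\lambda_a\sum\epsilon_j d_j j^2\big] + O(\dots)$. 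Here $\sum_j \epsilon_j d_j j^2 = \lambda_a \sum_j \epsilon_j/\log^2(j+3) \ge \lambda_a\cdot 20 > 0$ by the very definition of $r_*$, and this dominates the $(\sum \epsilon_j d_j j)^2$ term once one checks $\sum_j \epsilon_j d_j j = O(1)$ (again $d_j j = \lambda_a/(j\log^2(j+3))$, summable-ish with log loss, so $O(\log r) = O(\log(1/a))$, and after squaring it is beaten by the $\sum \epsilon_j d_j j^2 \gtrsim 20$ term since we may absorb the logs — here one may need the $\log$ powers in the statement of Theorem 5.2 to be generous, which the paper explicitly says it did not optimize). This yields $|\wt h(e^{it})| \le 1$ for $|t| \le a$. (3) For the remaining range $a \le |t| \le \pi$ I would use that $|\wt h(e^{it})|$ is controlled by the ``(i)'' estimate from \cite{chasesw}: the oscillation kills enough mass that $|\wt h(e^{it})| \le 1 - c|t|/\log^2(1/|t|) \le 1$; concretely, writing $\wt h(e^{it}) = \wt h(1) + \wt\lambda_a\sum_j \epsilon_j d_j(e^{ijt}-1)$ won't directly work, so instead bound $\Real(\wt h(e^{it})) = \wt\lambda_a\sum_j \epsilon_j d_j\cos(jt)$ and $\Imag$ separately, noting $\sum_j d_j \cos(jt) \le \sum_{j < 1/|t|} d_j + \sum_{j \ge 1/|t|} d_j\cos(jt)$ and the tail is $O(|t|)$ in absolute value by an Abel/summation-by-parts argument against the monotone sequence $d_j$.

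The main obstacle I expect is step (2): making the second-order expansion on $|t| \le a$ rigorous while controlling the cross term $(\wt\lambda_a \sum_j \epsilon_j d_j j)^2$ against the curvature term $\wt\lambda_a\sum_j \epsilon_j d_j j^2$. The curvature term is a genuine positive constant (this is precisely why $r_*$ was chosen to make $\sum_j \epsilon_j/\log^2(j+3) \in [20,21]$, matching property ``(ii)'' in the discussion preceding the lemma), but $\sum_j \epsilon_j d_j j$ could a priori be as large as $\sum_j d_j j \asymp \lambda_a \log r$, and after squaring this is $\asymp \log^2 r = \log^2(1/a)$, which is \emph{not} obviously dominated by the constant $\gtrsim 20$. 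The resolution should be that the $t^2\log^2(1/a)$ term is multiplied by $t^2$ and we are on $|t|\le a$, so it contributes only $O(a^2\log^2(1/a))$ which is $\ll a^2 \cdot (\text{const})$... no — both terms carry the same $t^2$, so the comparison is between the constants $(\sum\epsilon_j d_j j)^2$ and $\sum\epsilon_j d_j j^2$ directly. I would therefore need the more careful estimate that the \emph{signed} sum $\sum_j \epsilon_j d_j j$ is actually $O(1)$, not $O(\log r)$: since $d_j j = \lambda_a/(j\log^2(j+3))$ is decreasing and the sign pattern is a single block of $+1$'s followed by a single block of $-1$'s, summation by parts gives $\sum_j \epsilon_j d_j j = \sum_{j\le r_*} d_j j - \sum_{j>r_*}d_j j$, and since $r_* \asymp r^{1/4}$... here I'd compute that the second block $\sum_{j > r_*}^r \lambda_a/(j\log^2(j+3)) \approx \lambda_a(\,\tfrac{1}{\log(r+3)} - \tfrac{1}{\log(r_*+3)}\,)$-type telescoping is small, so $\sum_j\epsilon_j d_j j \approx \sum_{j\le r_*} \lambda_a/(j\log^2 j) = O(1)$, hence its square is $O(1)$ and is indeed beaten by choosing the range $[20,21]$ large enough (or, as the paper notes, by being generous with log powers in Theorem 5.2). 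Once that bound on the signed linear moment is in hand, steps (1)–(3) assemble routinely into $|\wt h(e^{it})| \le 1$ on all of $[-\pi,\pi]$, which is the claim.
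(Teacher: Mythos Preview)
Your plan has a genuine gap in the intermediate range of $|t|$. The second--order Taylor expansion in step~(2) is only valid while the cubic remainder
\[
\wt\lambda_a\sum_j d_j\,O\bigl((jt)^3\bigr)\;=\;O\Bigl(|t|^3\sum_j d_j j^3\Bigr)\;=\;O\Bigl(|t|^3\,r^2/\log^2 r\Bigr)
\]
stays below the curvature gain $t^2\cdot \wt\lambda_a\sum_j \epsilon_j d_j j^2\asymp t^2$; since $r=a^{-1/2}$ this forces $|t|\lesssim a\log^2(1/a)$. On the other hand the ``(i)'' decay you invoke in step~(3) is exactly the second assertion of Lemma~\ref{hproperties}, and that is only proved for $|t|\gtrsim a^{1/2}$. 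The window $a\log^2(1/a)\lesssim |t|\lesssim a^{1/2}$ is covered by neither argument, and in that window $jt$ (for $j$ up to $r$) sweeps an interval of order $1$, so no finite Taylor expansion of $e^{ijt}$ can absorb the error, while the summation--by--parts heuristic you sketch in step~(3) does not produce a bound below $1$ because $\wt\lambda_a>1$ and there is not yet enough oscillation to beat that excess.

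The paper bridges this window by replacing Taylor expansion with the \emph{exact} identity
\[
\frac{\cos x-1+\tfrac{x^2}{2}}{x^2}\;=\;\frac12-\frac12\Bigl(\frac{\sin(x/2)}{x/2}\Bigr)^{\!2}=:f(x),
\]
which rewrites $\Real[\wt h(e^{it})]$ as the quadratic piece $1-(\text{const})\,t^2$ plus a remainder
\[
\wt\lambda_a\, r^4 t^6\Bigl[\sum_{j\le r_*}\frac{1}{\log^2(j+3)}\,f_{tr}\!\Bigl(\frac{j}{r}\Bigr)-\sum_{j>r_*}\frac{1}{\log^2(j+3)}\,f_{tr}\!\Bigl(\frac{j}{r}\Bigr)\Bigr],\qquad f_c(x):=c^{-4}f(cx).
\]
A separate compactness argument (Claim~\ref{lemmaclaim}) shows this bracket is \emph{negative} for every $c=tr$ in a fixed bounded range, ultimately because $\int_0^b(\sin x/x)^2\,dx>\int_b^{2b}(\sin x/x)^2\,dx$ for all $b>0$. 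It is this sign, not any smallness, that yields $\Real[\wt h(e^{it})]\le 1-10t^2$ uniformly for all $|t|\le \tfrac{1}{100}$; combined with the easy bound $|\Imag\,\wt h(e^{it})|\le 2|t|$ one gets $|\wt h(e^{it})|^2\le(1-10t^2)^2+4t^2\le 1$. Two side remarks on your step~(2): the worry that $\sum_j d_j j\asymp\log r$ is unfounded, since $\sum_j 1/(j\log^2 j)$ already converges, so the unsigned sum is $O(1)$; and $r_*$ is not of order $r^{1/4}$ but rather $r/2+O(\log^2 r)$, as the defining condition on $r_*$ forces.
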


\vspace{0.5mm}

\begin{lemma}\label{hproperties}
There are absolute constants $c_4,c_5,C_6 > 0$ such that the following hold for $a > 0$ small enough. First, $h(e^{2\pi i t}) \in G_a$ for $|t| \le c_4a$. Second, $|h(e^{2\pi i t})| \le 1-c_5\frac{|t|}{\log^2(a^{-1})}$ for $t \in [\frac{-1}{2},\frac{1}{2}]\setminus [-C_6a^{1/2},C_6a^{1/2}]$.
\end{lemma}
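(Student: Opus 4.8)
The two assertions concern the image under $h$ of the unit circle near $z=1$, so the first thing I would do is pass from the polynomial $h(e^{2\pi i t})$ to a tractable model for small $t$. Write $h(z) = (1-a^{10})\wt\lambda_a \sum_{j=1}^r \epsilon_j d_j z^j$ and set $z = e^{2\pi i t}$. Since the coefficients $d_j = \lambda_a/(j^2\log^2(j+3))$ are summable with $\sum_j d_j = 1$, and the degree is $r = a^{-1/5}$ (as $r=a^{-1/2}$ with $a=n^{-2/5}$), the exponentials $e^{2\pi i j t}$ are all within $O(r|t|)$ of $1$ when $|t|\lesssim a$. I would Taylor-expand each $e^{2\pi i jt} = 1 + 2\pi i jt - 2\pi^2 j^2 t^2 + O(j^3|t|^3)$ and control the error terms using $\sum_j j d_j = O(\log(a^{-1})/\log^{-1}) $, $\sum_j j^2 d_j = O(r/\log^2 r)$ and $\sum_j j^3 d_j = O(r^2/\log^2 r)$ — all small enough multiples of powers of $r|t|$. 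The key algebraic point, engineered into the construction, is that $\sum_j \epsilon_j d_j$ is very close to $\tfrac12$-ish but more importantly $\wt\lambda_a \sum_j \epsilon_j d_j = 1$ (that is $\wt h(1)=1$), while the \emph{linear} term $\sum_j \epsilon_j j d_j$ is comparable to $-\sum_{j>r_*} j d_j \asymp - \log(a^{-1})$ in some normalization — I would need to recheck this against the $[20,21]$ normalization of the $r_*$-defining sum; the point of that normalization is precisely to pin down the size and sign of the first-order term. So near $t=0$, $h(e^{2\pi i t}) \approx 1 + i A t - B t^2 + \dots$ with $A$ a negative constant multiple of a power of $\log(a^{-1})$ and $B>0$ of order $r/\log^2 r$.

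For the \textbf{first assertion} ($h(e^{2\pi i t}) \in G_a$ for $|t|\le c_4 a$), recall $G_a$ is the lens-shaped region between the circular arcs $I_{a/2}$ and $I_a$, where $I_t = \{z : \arg(\tfrac{\alpha - z}{z-\beta}) = t\}$ and $\alpha = e^{ia},\beta = e^{-ia}$. A point $z$ lies in $G_a$ iff $\arg\big(\tfrac{\alpha-z}{z-\beta}\big) \in (a/2, a)$, and $I_a$ is the unit circle itself near $1$. Using $h(e^{2\pi i t}) = 1 + iAt - Bt^2 + O(\cdot)$ I would compute $\tfrac{\alpha - h}{h - \beta}$ directly: with $\alpha - \beta = 2i\sin a \approx 2ia$ and $\alpha+\beta \approx 2$, one gets $\alpha - h \approx (ia - iAt) + Bt^2$ and $h - \beta \approx (ia + iAt) - Bt^2$, so the argument of the ratio is controlled by the real part $Bt^2$ playing off against the imaginary part $a\pm At$. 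The statement that $h$ stays strictly \emph{inside} the disk (Lemma \ref{unitdisk}) forces $\arg(\tfrac{\alpha-h}{h-\beta}) < a$ on the nose, i.e. $h$ is on the $\beta$-side of $I_a$; and the $-Bt^2$ term (together with choosing $c_4$ small so $|At| \ll a$) is what pushes the argument above $a/2$. So the first part reduces to: (a) invoke Lemma \ref{unitdisk} for the upper bound $\arg < a$, and (b) a short estimate showing $\arg > a/2$ when $|t|\le c_4 a$, which is where the quadratic term and the smallness of $c_4$ enter.

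For the \textbf{second assertion} ($|h(e^{2\pi i t})| \le 1 - c_5 |t|/\log^2(a^{-1})$ on $[-\tfrac12,\tfrac12]\setminus[-C_6 a^{1/2}, C_6 a^{1/2}]$), this is the analogue of property (i) from \cite{chasesw} and the argument there should transfer. The mechanism: $|h(e^{2\pi i t})|^2 = (1-a^{10})^2\wt\lambda_a^2 |\sum_j \epsilon_j d_j e^{2\pi i j t}|^2$, and I expand $|\sum_j \epsilon_j d_j e^{2\pi i jt}|^2 = \sum_{j,k} \epsilon_j\epsilon_k d_j d_k \cos(2\pi(j-k)t) = 1/\wt\lambda_a^2 - \sum_{j,k}\epsilon_j\epsilon_k d_j d_k(1 - \cos(2\pi(j-k)t))$, using $\wt h(1)=1$. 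Now $1-\cos(2\pi(j-k)t) \ge c\min((j-k)^2 t^2, 1)$, and the dominant diagonal-ish contribution, using that the $d_j$ decay like $j^{-2}\log^{-2} j$, gives a loss of order $|t|/\log^2(a^{-1})$ once $|t| \gtrsim 1/r = a^{1/2}$ (so that $(j-k)^2 t^2 \gtrsim 1$ is reached for the top-scale indices $j,k\asymp r$); the sign pattern $\epsilon_j\epsilon_k$ doesn't help the adversary because the negative cross terms are of smaller order — this is exactly the remark in the text that "changing those coefficients doesn't affect (i)". The constant $C_6$ is chosen so that for $|t|\ge C_6 a^{1/2}$ the relevant frequencies genuinely oscillate. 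I would also need to handle $|t|$ bounded away from $0$ (up to $1/2$) where the bound is trivially true since $h$ is strictly inside the disk by Lemma \ref{unitdisk} with room to spare; the content is the regime $a^{1/2}\lesssim |t| \lesssim$ some small constant.

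The \textbf{main obstacle} is the bookkeeping in the second assertion: getting the constant $c_5$ and, more delicately, the exact $\log^2(a^{-1})$ power right, uniformly across the whole range $|t|\in[C_6 a^{1/2},\tfrac12]$, while showing the $\epsilon_j$ sign changes cost nothing. One has to split the $t$-range dyadically and for each scale identify which pairs $(j,k)$ contribute $1-\cos \asymp 1$ versus $\asymp (j-k)^2t^2$, then sum $d_jd_k$ over those pairs — the harmonic-type sums $\sum_{j\le 1/t} \sum_{k} d_j d_k (j-k)^2 t^2$ and $\sum_{j > 1/t} d_j \cdot (\text{something})$ both need to be shown to be $\gtrsim |t|/\log^2(a^{-1})$ and not accidentally cancelled. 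This is essentially the computation from \cite{chasesw}, so I would lean on it, but it is the technical heart. By contrast the first assertion is short given Lemmas \ref{unitdisk} and the linear/quadratic expansion.
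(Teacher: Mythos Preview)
Your plan for the first assertion is essentially the paper's: Taylor-expand $h(e^{2\pi i t})$, write it as $1-\delta+i\epsilon$, compute $\arg\big(\tfrac{\alpha-h}{h-\beta}\big)$, and use Lemma~\ref{unitdisk} for the upper bound. One correction, though: your claimed orders for $A$ and $B$ are off. The whole point of the $[20,21]$ normalization defining $r_*$ is that the \emph{quadratic} coefficient $\sum_j \epsilon_j j^2 d_j = \lambda_a\sum_j \epsilon_j/\log^2(j+3)$ lands in a bounded window, so $B$ is an absolute constant, not of order $r/\log^2 r$. Likewise the linear coefficient $\sum_j \epsilon_j j d_j = \lambda_a\sum_j \epsilon_j/(j\log^2(j+3))$ is a bounded positive constant (the series converges), not a power of $\log(a^{-1})$ and not negative. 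With the correct sizes, $\delta \asymp t^2+a^{10}$ and $\epsilon \asymp t$ for $|t|\le c_4 a$, and the ratio $(a^3-2a\delta)/(a^2-\epsilon^2)$ sits in $(a/2,a)$ once $c_4$ is small; your version with $A$ a power of $\log$ would give $|\epsilon|$ potentially larger than $a$, which would break the estimate.

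For the second assertion your route via expanding $|h|^2$ and a dyadic analysis of $\sum_{j,k}\epsilon_j\epsilon_k d_jd_k(1-\cos 2\pi(j-k)t)$ can be made to work, but it is considerably heavier than what the paper does. The paper's observation is that the ``negative'' part of $h$ is simply small in $\ell^1$: by the triangle inequality,
\[
\Bigl|\sum_{j>r_*} d_j e^{2\pi i jt}\Bigr| \le \sum_{j>r_*} d_j = O\!\left(\frac{1}{r_*\log^2(a^{-1})}\right),
\]
so $|h(e^{2\pi i t})| \le \wt\lambda_a\bigl|\sum_{j\le r_*} d_j e^{2\pi i jt}\bigr| + O(1/(r_*\log^2(a^{-1})))$. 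The first sum has \emph{positive} coefficients summing to at most $1$, so the bound from \cite{chasesw} applies verbatim and gives $\le 1 - c|t|/\log^2(a^{-1})$ for $|t|\gtrsim 1/r_*$. Choosing $C_6$ large absorbs the $O(1/(r_*\log^2))$ error and the $\wt\lambda_a$ renormalization. This avoids the double sum, the sign-pattern bookkeeping, and the dyadic splitting entirely, and it covers the full range $|t|\in[C_6a^{1/2},\tfrac12]$ in one stroke (so your separate ``$|t|$ bounded away from $0$'' case, which Lemma~\ref{unitdisk} alone would not handle quantitatively, is unnecessary).
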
 

\begin{proof}
Take $|t| \le a$. Then, \begin{align*} \wt{h}(e^{2\pi i t}) &= \wt{\lam}_a\sum_{j=1}^{r_*} \frac{\lambda_a}{j^2\log^2(j+3)}(1+2\pi i tj-2\pi^2t^2j^2+O(t^3j^3)) \\ &\hspace{15mm} -\wt{\lam}_a\sum_{j=r_*+1}^r \frac{\lambda_a}{j^2\log^2(j+3)}(1+2\pi i tj-2\pi^2t^2j^2+O(t^3j^3)).\end{align*} By our choice of $r_*$, $h(e^{2\pi i t}) = 1-\delta+\epsilon i$ for $\delta := c_1 t^2+a^{10}+O(\frac{t^3 r^2}{\log^2 r})$ and $\ep := c_2 t+O(\frac{t^3 r^2}{\log^2 r})$, where $c_1,c_2$ are bounded positive quantities that are bounded away from $0$. By multiplying the denominator by its conjugate, we have $$\arg\left(\frac{e^{ia}-(1-\delta+\epsilon i)}{(1-\delta+\epsilon i)-e^{-ia}}\right) = \arg\Big(\left[e^{ia}-(1-\delta+\epsilon i)\right]\cdot\left[(1-\delta-\epsilon i)-e^{ia}\right]\Big).$$ The ratio of the imaginary part to the real part of the term inside $\arg(\cdot)$ is $$\frac{2(1-\delta-\cos(a))\sin(a)}{-\cos^2(a)+2(1-\delta)\cos(a)-(1-\delta)^2+\sin^2(a)-\epsilon^2}.$$ Writing $\cos(a) = 1-\frac{1}{2}a^2+O(a^4)$ and $\sin(a) = a+O(a^3)$, and using $\delta = O(a^2)$, the above simplifies to $$\frac{a^3-2a\delta+O(a^4)}{a^2-\epsilon^2+O(a^3)}.$$ If $|t| \le c_4 a$, then, as $\delta = c_1t^2+a^{10}+O(\frac{t^3 r^2}{\log^2 r}), \ep = c_2 t+O(\frac{t^3 r^2}{\log^2 r})$, the inverse tangent of the above is at least $\frac{a}{2}$; the arctangent is at most $a$, since, by Lemma \ref{unitdisk}, $h(e^{2\pi i t})$ lies in the unit disk (alternatively, one may note $2a\delta > \epsilon^2$). 

\vs

We now establish the second part of the lemma. What \cite{chasesw} shows is $$\left|\sum_{j=1}^m \frac{\lambda_a e^{2\pi i tj}}{j^2\log^2(j+3)}\right| \le 1-\frac{\lambda_a |t|}{3\log^2(m+3)}+\frac{\lambda_a}{m\log^2(m+3)}$$ for any $m \ge 1$ and $t \in [-\frac{1}{2},\frac{1}{2}]\setminus[-3m^{-1},3m^{-1}]$. For $m = r_*$, if $|t| > C_6 a^{1/2}$, for say $C_6 = 100$, then certainly $3|t|^{-1} < m$, and so we have \begin{equation}\label{sumbypartsbound} \left|\sum_{j=1}^{r_*} \frac{\lam_a e^{2\pi i tj}}{j^2\log^2(j+3)}\right| \le 1-c\frac{|t|}{\log^2(a^{-1})}.\end{equation} We can crudely bound \begin{equation}\label{crudebound} \left|\sum_{j=r_*+1}^r \frac{\lambda_a e^{2\pi i tj}}{j^2\log^2(j+3)}\right| \le \frac{4}{\log^2(a^{-1})}\frac{1}{r_*}.\end{equation} Combining \eqref{sumbypartsbound} and \eqref{crudebound}, we obtain $$\left|\sum_{j=1}^r \frac{\lam_a \ep_j e^{2\pi i tj}}{j^2\log^2(j+3)}\right| \le 1-c_5'\frac{|t|}{\log^2(a^{-1})}$$ for $|t| \ge C_6 r^{-1}$, with $c_5' > 0$ small and $C_6$ large enough. Now, since \begin{align*} \wt{\lam}_a^{-1} &= \sum_{j=1}^{r_*} \frac{\lam_a}{j^2\log^2(j+3)}-\sum_{j=r_*+1}^r \frac{\lam_a}{j^2\log^2(j+3)} \\ &= 1-2\sum_{j=r_*+1}^r \frac{\lam_a}{j^2\log^2(j+3)} \\ &\ge 1-2\frac{2}{\log^2(a^{-1})}\frac{2}{r_*} \\ &\ge 1-\frac{20}{r\log^2 (a^{-1})},\end{align*} we see $$\left|\wt{\lam}_a\sum_{j=1}^r \frac{\lam_a \ep_j e^{2\pi i tj}}{j^2\log^2(j+3)} \right| \le 1-c_5\frac{|t|}{\log^2(a^{-1})}$$ for $|t| \ge C_6 r^{-1}$, provided $C_6$ is large enough. Since $1-a^{10} \le 1$, we are done.
\end{proof}

\vs

Let $m = c_4^{-1}n^{2/5}, J_1 = c_5^{-1}n^{-1/5}m\log^4 n$, and $J_2 = m-J_1$. A minor adapation of the relevant proof in \cite{chasesw} proves the following. 

\vspace{1mm}

\begin{lemma}\label{tildepproduct}
Suppose $\wt{p}(z) = 1-z^d$ for some $d \le n^{1/5}$. Then $\prod_{j=J_1}^{J_2-1} |\wt{p}(h(e^{2\pi i \frac{j+\delta}{m}}))| \le \exp(Cn^{1/5}\log^5n)$ for any $\delta \in [0,1)$. 
\end{lemma}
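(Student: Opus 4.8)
The plan is to bound $\prod_{j=J_1}^{J_2-1} |1 - h(e^{2\pi i(j+\delta)/m})^d|$ by grouping the factors according to how close the argument $t_j := (j+\delta)/m$ is to $0$ (equivalently to $1$, by symmetry), since Lemma~\ref{hproperties} tells us that for most $j$ we have $|h(e^{2\pi i t_j})| \le 1 - c_5 |t_j|/\log^2(a^{-1})$, which forces $|h(e^{2\pi i t_j})|^d$ to be tiny and hence $|1 - h(e^{2\pi i t_j})^d| \le 1 + (\text{tiny})$. First I would split $\{J_1,\dots,J_2-1\}$ into a ``bulk'' range, where $|t_j|$ (and $|1-t_j|$) exceed $C_6 a^{1/2} = C_6 n^{-1/5}$, and the complementary ``near-endpoint'' range. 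The choice $J_1 = c_5^{-1} n^{-1/5} m \log^4 n$ is exactly engineered so that at $j = J_1$ we already have $|t_j| = J_1/m = c_5^{-1} n^{-1/5}\log^4 n$, which both exceeds the threshold $C_6 n^{-1/5}$ and, crucially, makes $c_5 |t_j| d / \log^2(a^{-1}) \ge \log^4 n \cdot d / \log^2(a^{-1}) \gg 1$ even for $d=1$ (note $\log(a^{-1}) = \tfrac{2}{5}\log n$). So across the entire product range $[J_1, J_2)$ every factor $h(e^{2\pi i t_j})$ lies in the regime covered by the second part of Lemma~\ref{hproperties}.

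The main estimate is then as follows. For $j$ in the bulk, write $\rho_j := |h(e^{2\pi i t_j})| \le 1 - c_5 s_j/\log^2(a^{-1})$ where $s_j := \min(t_j, 1-t_j) \ge J_1/m$. Then $|1 - h(e^{2\pi i t_j})^d| \le 1 + \rho_j^d \le 1 + \exp(-c_5 d s_j/\log^2(a^{-1})) \le 1 + \exp(-d\log^4 n)$, which over all $m \le c_4^{-1} n^{2/5}$ values of $j$ contributes a product at most $(1 + \exp(-\log^4 n))^{m} \le \exp(m \exp(-\log^4 n)) \le 2$, say. This already handles everything except a possible sliver of indices near the two endpoints $t = 0$ and $t = 1$ where $|h(e^{2\pi i t_j})|$ could be close to $1$ but $h$ is not yet in the good regime — however, by the definitions those indices lie in $[0,J_1) \cup [J_2, m)$, which is outside our product range, so in fact \emph{all} of $[J_1,J_2)$ is bulk and the crude bound above suffices. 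I would double-check that $h(1)=1-a^{10}<1$ and that $h$ maps $\ol{\D}$-boundary into $\ol{\D}$ (Lemma~\ref{unitdisk}), so that $\rho_j \le 1$ throughout and no factor $|1 - h(e^{2\pi i t_j})^d|$ exceeds $2$; this guards against any factor blowing up.

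Actually, to get a clean statement matching the claimed $\exp(Cn^{1/5}\log^5 n)$ bound, I would be slightly more careful near $j = J_1$ and $j = J_2 - 1$: there $s_j$ is as small as $J_1/m = c_5^{-1}n^{-1/5}\log^4 n$, so the per-factor bound is $1 + \exp(-c d \log^4 n)$, and there are $O(m) = O(n^{2/5})$ factors, giving the trivial overall bound $2^{O(n^{2/5})}$ — too weak. The fix is to sum logarithms rather than multiply crude bounds: $\sum_{j=J_1}^{J_2-1} \log(1 + \rho_j^d) \le \sum_j \rho_j^d \le \sum_j \exp(-c_5 d s_j / \log^2(a^{-1}))$, and since $s_j$ grows linearly in $|j - J_1|$ away from $J_1$ (and similarly near $J_2$), this is a geometric-type sum dominated by its first term times $O(\log^2(a^{-1})/(c_5 d) \cdot m/1) $... more precisely $\sum_{k \ge J_1} \exp(-c_5 d (k/m)/\log^2(a^{-1})) \le \frac{m \log^2(a^{-1})}{c_5 d}\exp(-c_5 d J_1/(m\log^2(a^{-1}))) = \frac{m\log^2(a^{-1})}{c_5 d} n^{-\Omega(\log^2 n)}$, which is far smaller than $1$. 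Hence $\prod_{j=J_1}^{J_2-1}|1 - h(e^{2\pi i(j+\delta)/m})^d| \le \exp(o(1)) \le \exp(Cn^{1/5}\log^5 n)$ comfortably. The main obstacle I anticipate is purely bookkeeping: carefully tracking the symmetric behavior of $s_j$ near both endpoints $t=0$ and $t=1$, confirming the threshold inequalities $J_1/m > C_6 a^{1/2}$ and $c_5(J_1/m)/\log^2(a^{-1}) \gg 1$ hold with the stated constants, and invoking the ``minor adaptation of \cite{chasesw}'' — i.e. verifying that the sum-by-parts bound on $|\sum_j \lambda_a e^{2\pi i tj}/(j^2\log^2(j+3))|$ underlying Lemma~\ref{hproperties} is robust enough to be applied at the scale $m = c_4^{-1}n^{2/5}$ uniformly in $\delta \in [0,1)$.
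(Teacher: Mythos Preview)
Your approach has a genuine gap stemming from an arithmetic slip. You write that at $j=J_1$ one has $c_5|t_j|\,d/\log^2(a^{-1}) \ge \log^4 n \cdot d/\log^2(a^{-1})$, but in fact $c_5|t_{J_1}| = c_5\cdot J_1/m = n^{-1/5}\log^4 n$, so the relevant exponent is only $\asymp d\,n^{-1/5}\log^2 n$, which for $d=1$ tends to $0$. The same slip recurs in your geometric-sum refinement: you claim $\exp(-c_5 d J_1/(m\log^2(a^{-1})))=n^{-\Omega(\log^2 n)}$, whereas the exponent is $O(n^{-1/5}\log^2 n)$ for $d=1$, making that factor essentially $1$.

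Once the arithmetic is corrected, the one-sided bound $|1-h^d|\le 1+|h|^d$ is simply too weak when $d$ is small. For $d=1$ every $\rho_j$ in the range satisfies $\rho_j \ge 1-O(1/\log^2 n)$, so $\sum_{j=J_1}^{J_2-1}\rho_j^d \gtrsim m \asymp n^{2/5}$, and your argument yields only $\prod_j|1-h(e^{2\pi i t_j})| \le \exp(O(n^{2/5}))$, far short of $\exp(Cn^{1/5}\log^5 n)$. No amount of bookkeeping rescues this: the triangle inequality discards all cancellation between factors above and below $1$.

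The approach the paper has in mind (the ``minor adaptation'' of \cite{chasesw}, exactly parallel to the displayed proof of Lemma~\ref{uproduct}) does not bound each factor from above. Instead one sets $g(t)=2\log|\wt p(h(e^{2\pi i t}))|$ and uses harmonicity of $w\mapsto\log|\wt p(h(w))|$ together with $\wt p(h(0))=\wt p(0)=1$ to get $\int_0^1 g(t)\,dt=0$; note $|\wt p(h)|\ge 1-|h|^d\ge 1-(1-a^{10})^d>0$, so $g$ is well defined and $|g|\le C\log n$. Then one compares the Riemann sum $\tfrac{1}{m}\sum_{j=J_1}^{J_2-1}g((j+\delta)/m)$ to $\int_{J_1/m}^{J_2/m}g$, bounds the excised arcs near $t=0,1$ by $(2J_1/m)\cdot C\log n = O(n^{-1/5}\log^5 n)$, and controls the mean-value error $\tfrac{1}{m^2}\sum_j\max|g'|$ via $|g'(t)|\lesssim d/(1-|h|^d)^2$ together with Lemma~\ref{hproperties}. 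The harmonicity identity is precisely what captures the cancellation your argument throws away.
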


\vspace{1mm}

By adapating the proof of the above lemma, we prove the following.

\vspace{1mm}

\begin{lemma}\label{uproduct}
Suppose $u(z) = z-\zeta$ for some $\zeta \in \partial \mathbb{D}$. Then, for any $\delta \in [0,1)$, we have $\prod_{j=J_1}^{J_2-1} |u(h(e^{2\pi i \frac{j+\delta}{m}}))| \le \exp(Cn^{1/5}\log^5n)$. 
\end{lemma}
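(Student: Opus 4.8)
The plan is to reduce the restricted product to a product of a single low-degree polynomial over \emph{all} $m$-th roots of unity, where a resultant identity makes everything transparent, and then to divide out the boundedly many omitted factors; this is the same mechanism that underlies Lemma \ref{tildepproduct}, and it works cleanly here precisely because $u\circ h$ has degree only $r=a^{-1/2}=n^{1/5}$. First I would write $\omega=e^{2\pi i/m}$ and $\xi=e^{2\pi i\delta/m}$, so that $e^{2\pi i(j+\delta)/m}=\xi\omega^j$ and $\{\omega^j:0\le j\le m-1\}$ is exactly the set of $m$-th roots of unity, and set $g(z)=h(\xi z)-\zeta=u(h(\xi z))$, a polynomial in $z$ of degree $r':=\deg g\le r$ with $g(\omega^j)=u(h(e^{2\pi i(j+\delta)/m}))$. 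The key input is geometric: by Lemma \ref{unitdisk} we have $|\wt h|\le 1$ on $\partial\mathbb{D}$, so by the maximum modulus principle $|\wt h|\le 1$ on $\ol{\mathbb{D}}$, hence $|h|\le 1-a^{10}$ on $\ol{\mathbb{D}}$; therefore for every $z\in\ol{\mathbb{D}}$ (so that $\xi z\in\ol{\mathbb{D}}$),
$$|g(z)|=|h(\xi z)-\zeta|\ge|\zeta|-|h(\xi z)|\ge a^{10}>0.$$
In particular $g$ has no zeros in $\ol{\mathbb{D}}$, so its roots $\rho_1,\dots,\rho_{r'}$ all satisfy $|\rho_l|>1$; and since $g(0)=h(0)-\zeta=-\zeta$ has $|g(0)|=1$, writing $\ell$ for the leading coefficient of $g$ we get $\prod_{l=1}^{r'}|\rho_l|=|g(0)|/|\ell|=1/|\ell|$.

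Next I would bound the product over all of the $m$-th roots of unity. Factoring $g$ through its roots and using $\prod_{j=0}^{m-1}(X-\omega^j)=X^m-1$,
$$\prod_{j=0}^{m-1}|g(\omega^j)|=|\ell|^m\prod_{l=1}^{r'}\bigl|\rho_l^m-1\bigr|\le|\ell|^m\prod_{l=1}^{r'}\bigl(|\rho_l|^m+1\bigr)\le 2^{r'}|\ell|^m\Bigl(\prod_{l=1}^{r'}|\rho_l|\Bigr)^m=2^{r'}\le 2^{r},$$
where I used $|\rho_l|^m+1\le 2|\rho_l|^m$ (valid since $|\rho_l|>1$) and then $\prod_l|\rho_l|=|\ell|^{-1}$. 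With $r=n^{1/5}$ this full product is already only $\exp(O(n^{1/5}))$.

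Finally I would recover the restricted product by dividing out the missing indices $E=\{0,\dots,J_1-1\}\cup\{J_2,\dots,m-1\}$, which has $|E|=2J_1$ and complement $\{J_1,\dots,J_2-1\}$ in $\{0,\dots,m-1\}$. Since $|g(\omega^j)|\ge a^{10}$ for every $j$, we have $\prod_{j\in E}|g(\omega^j)|\ge(a^{10})^{2J_1}$, hence
$$\prod_{j=J_1}^{J_2-1}\bigl|u\bigl(h(e^{2\pi i(j+\delta)/m})\bigr)\bigr|=\frac{\prod_{j=0}^{m-1}|g(\omega^j)|}{\prod_{j\in E}|g(\omega^j)|}\le 2^{r}(a^{10})^{-2J_1}.$$
Plugging in $a=n^{-2/5}$ (so $a^{10}=n^{-4}$) and $J_1=c_5^{-1}n^{-1/5}m\log^4 n=(c_4c_5)^{-1}n^{1/5}\log^4 n$, the right-hand side equals $\exp\bigl(r\log 2+8J_1\log n\bigr)\le\exp(Cn^{1/5}\log^5 n)$ for a suitable absolute constant $C$, which is the claim.

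I do not expect a genuine obstacle here: this is just the roots-of-unity/resultant computation used for Lemma \ref{tildepproduct}, specialized to the degree-one polynomial $u$, and its efficiency rests entirely on $\deg(u\circ h)=n^{1/5}$ being small enough that the crude bound $2^{\deg}$ on the full product is affordable — exactly the feature that fails for something like $\wt p(z)=1-z^d$, whose composition with $h$ can have degree $\asymp n^{2/5}$. The one step to handle with a little care is the lower bound on the omitted factors: it is what produces the power of the logarithm in the final exponent, and it is consistent with $J_1$ being chosen of size $\Theta(n^{1/5}\log^4 n)$. The two bookkeeping points — that $g\not\equiv 0$ (immediate from $g(0)=-\zeta\ne 0$, which also uses $\zeta\in\partial\mathbb{D}$ so that $|g(0)|=1$ and all roots of $g$ genuinely lie outside $\ol{\mathbb{D}}$) and that the $\omega^j$ are distinct — are trivial.
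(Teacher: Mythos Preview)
Your argument is correct. The key identities --- $g(0)=-\zeta$ with $|\zeta|=1$, the absence of zeros of $g$ in $\ol{\D}$ via Lemma \ref{unitdisk} and the factor $1-a^{10}$, the resultant computation $\prod_j|g(\omega^j)|=|\ell|^m\prod_l|\rho_l^m-1|$, and the cancellation $|\ell|^m\bigl(\prod_l|\rho_l|\bigr)^m=1$ --- all go through as written, and the final arithmetic with $r=n^{1/5}$ and $J_1\asymp n^{1/5}\log^4 n$ lands exactly on $\exp(Cn^{1/5}\log^5 n)$.

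Your route, however, is genuinely different from the paper's. The paper does not exploit that $u\circ h$ has small degree; instead it sets $g(t)=2\log|u(h(e^{2\pi i t}))|$, uses harmonicity to obtain $\int_0^1 g=0$ (equivalently, the mean-value property, since $u\circ h$ is zero-free on $\ol{\D}$ and $u(h(0))=-\zeta$), and then compares the Riemann sum $\frac{1}{m}\sum_{J_1}^{J_2-1}g(j/m)$ to the integral via the mean value theorem, with an explicit (and somewhat laborious) bound on $|g'|$ coming from Lemma \ref{hproperties}. Your approach is more elementary and shorter: it replaces the analytic Riemann-sum-with-derivative machinery by the exact algebraic identity $\prod_j(X-\omega^j)=X^m-1$, paying only the harmless factor $2^{r}=2^{n^{1/5}}$. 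The paper's method, in turn, is degree-agnostic --- it never sees $\deg(u\circ h)$ --- which is why the same template also handles Lemma \ref{tildepproduct}, whereas (as you note) your crude $2^{\deg}$ bound would blow up to $2^{n^{2/5}}$ there. One small caveat: your remark that this is ``the same mechanism that underlies Lemma \ref{tildepproduct}'' is speculative from the paper's perspective, since the paper defers that lemma to \cite{chasesw} and proves the present lemma by the integral/derivative argument; regardless, your proof stands on its own.
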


\begin{proof}
First note that \begin{equation}\label{note} |u(h(e^{2\pi i \theta}))| \ge 1-|h(e^{2\pi i \theta})| \ge a^{10}. \end{equation} Define $g(t) = 2\log|u(h(e^{2\pi i (t+\frac{\delta}{m})}))|$. For notational ease, we assume $\delta = 0$; the argument about to come works for all $\delta \in [0,1)$. Since \eqref{note} implies $g$ is $C^1$, by the mean value theorem we have \begin{align}\label{mvt}\left|\frac{1}{m}\sum_{j=J_1}^{J_2-1} g\left(\frac{j}{m}\right)-\int_{J_1/m}^{J_2/m} g(t)dt\right| &= \left|\sum_{j=J_1}^{J_2-1} \int_{j/m}^{(j+1)/m} \left(g(t)-g\left(\frac{j}{m}\right)\right)dt\right| \nonumber\\ &\le \sum_{j=J_1}^{J_2-1} \int_{j/m}^{(j+1)/m} \left(\max_{\frac{j}{m} \le y \le \frac{j+1}{m}} |g'(y)|\right)\frac{1}{m} dt \nonumber\\ &\le \frac{1}{m^2}\sum_{j=J_1}^{J_2-1}\max_{\frac{j}{m} \le y \le \frac{j+1}{m}} |g'(y)|. \end{align} Since $w \mapsto \log|u(h(w))|$ is harmonic and $\log|u(h(0))| = \log|u(0)| = 0$, we have $$\int_0^1 g(t)dt = 2\int_0^1 \log |u(h(e^{2\pi i t}))|dt = 0,$$ and therefore
\begin{equation} \label{harmonicapplication}
\left|\int_{J_1/m}^{J_2/m} g(t)dt\right| \le \left|\int_{0}^{J_1/m}g(t)dt\right|+\left|\int_{J_2/m}^1 g(t)dt\right|.
\end{equation}
Since $$a^{10} \le \left|u(h(e^{2\pi i t}))\right| \le 2$$ for each $t$, we have
\begin{equation}\label{integralbounds}
\left|\int_{0}^{J_1/m}g(t)dt\right|+\left|\int_{J_2/m}^1 g(t)dt\right| \le 20\left(\frac{J_1}{m}+(1-\frac{J_2}{m})\right)\log n \le C\frac{\log^5 n}{n^{1/5}}.
\end{equation}
By \eqref{mvt}, \eqref{harmonicapplication}, and \eqref{integralbounds}, we have $$\left|\frac{1}{m}\sum_{j=J_1}^{J_2-1} g(\frac{j}{m})\right| \le C\frac{\log^5 n}{n^{1/5}}+\frac{1}{m^2}\sum_{j=J_1}^{J_2-1} \max_{\frac{j}{m} \le t \le \frac{j+1}{m}} |g'(t)|.$$
Multiplying through by $m$, changing $C$ slightly, and exponentiating, we obtain 
\begin{equation}\label{partialproductbound}
\prod_{j=J_1}^{J_2-1} \left|u(h(e^{2\pi i \frac{j}{m}}))\right|^2 \le \exp\left(Cn^{1/5}\log^5 n + \frac{1}{m}\sum_{j=J_1}^{J_2-1} \max_{\frac{j}{m} \le t \le \frac{j+1}{m}} |g'(t)|\right).
\end{equation}
Note $$g'(t_0) = \frac{\frac{\partial}{\partial t}\Big[|u(h(e^{2\pi i t}))|^2\Big] \Big|_{t=t_0}}{|u(h(e^{2\pi i t_0}))|^2}.$$ We first show \begin{equation}\label{derivativebound} \frac{\partial}{\partial t}\Big[|u(h(e^{2\pi i t}))|^2\Big] \Big|_{t=t_0} \le 500\end{equation} for each $t_0 \in [0,1]$. Let $\wt{d}_j = d_j$ for $j \le r_*$ and $\wt{d}_j = -d_j$ for $j > r_*$ so that $h(e^{2\pi i t}) = (1-a^{10})\sum_{j=1}^r \wt{d}_j e^{2\pi i tj}$. Then, $$\left|u\left(h(e^{2\pi i t})\right)\right|^2 = \left|(1-a^{10})\sum_{j=1}^r \wt{d}_j e^{2\pi i jt}-\zeta\right|^2$$ \begin{equation}\label{derivative} = (1-a^{10})^2\left|\sum_{j=1}^r \wt{d}_j e^{2\pi i jt}\right|^2-2\Real\left[(1-a^{10})\zeta \sum_{j=1}^r \wt{d}_j e^{2\pi i jt}\right]+1.\end{equation} The derivative of the first term is $$(1-a^{10})^2\sum_{j_1,j_2=1}^r \wt{d}_{j_1}\wt{d}_{j_2}2\pi(j_1-j_2)e^{2\pi i (j_1-j_2)t}.$$ Since $$\sum_{j=1}^r |\wt{d}_j| \le 4$$ and $$\sum_{j=1}^r j|\wt{d}_j| \le 4,$$ we get an upper bound of $250$ for the absolute value of the derivative of the first term of \eqref{derivative}. The derivative of the second term, if $\zeta = e^{i\theta}$, is $$2(1-a^{10})\sum_{j=1}^r \wt{d}_j \sin(2\pi j t+\theta)2\pi j,$$ which is also clearly upper bounded by (crudely) $250$. We've thus shown \eqref{derivativebound}. 

\vs

Recall $|u(h(e^{2\pi i \theta}))| \ge 1-|h(e^{2\pi i \theta})|$. For $j \in [J_1,J_2] \sub [C_6a^{1/2}m,(1-C_6a^{1/2})m]$, we use (by Lemma \ref{hproperties}) $$|h(e^{2\pi i \frac{j}{m}})| \le 1-c_5\frac{\min(\frac{j}{m},1-\frac{j}{m})}{\log^2 n}$$ to obtain $$\frac{1}{m}\sum_{j=J_1}^{J_2-1}\max_{\frac{j}{m} \le t \le \frac{j+1}{m}} |g'(t)| \le \frac{1}{m}\sum_{j=J_1}^{J_2-1} \frac{500}{\left(c_5\frac{\min(\frac{j}{m},1-\frac{j}{m})}{\log^2 n})\right)^2}.$$ Up to a factor of $2$, we may deal only with $j \in [J_1,\frac{m}{2}]$. Then we obtain \begin{align*}\frac{1}{m}\sum_{j=J_1}^{J_2-1}\max_{\frac{j}{m} \le t \le \frac{j+1}{m}} |g'(t)| &\le \frac{1}{m}\sum_{j=J_1}^{m/2} \frac{500m^2\log^4 n}{c_5^2j^2} \\ &\le \frac{500m\log^4 n}{c_5^2}\frac{2}{J_1} \\ &\le Cn^{1/5}. \end{align*}
\end{proof}

\vs

Let $\mc{Q}_n$ denote all polynomials of the form $(z-\alpha)(z-\beta)p(z)$ for $p \in \mc{P}_n$. 

\vspace{1mm}

\begin{corollary}\label{productbound}
For any $q \in \mc{Q}_n$ and $\delta \in [0,1)$, $\prod_{j\not \in \{0,m-1\}} |q(h(e^{2\pi i \frac{j+\delta}{m}}z))| \le \exp(Cn^{1/5}\log^5 n)$. 
\end{corollary}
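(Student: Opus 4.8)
The plan is to reduce the corollary to Lemmas~\ref{tildepproduct} and~\ref{uproduct} by factoring $q$ and treating separately the indices those lemmas do not cover. Write $q(z)=(z-\alpha)(z-\beta)p(z)$ with $p\in\mc{P}_n$, and set $\omega_j=e^{2\pi i(j+\delta)/m}$. I would first split
$$\prod_{j\notin\{0,m-1\}}|q(h(\omega_j))|=\Big(\prod_{j=J_1}^{J_2-1}|q(h(\omega_j))|\Big)\Big(\prod_{j\in E}|q(h(\omega_j))|\Big),$$
where $E=\{1,\dots,J_1-1\}\cup\{J_2,\dots,m-2\}$ has at most $2J_1$ elements. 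On $E$ I would use only the crude bound $|h(\omega_j)|\le 1-a^{10}$ (Lemma~\ref{unitdisk} and the maximum principle, as in~\eqref{note}): then $|q(h(\omega_j))|\le 2\cdot 2\cdot(2+a^{-10})\le n^5$, and since $|E|\le 2J_1$ with $J_1\log n=O(n^{1/5}\log^5 n)$, the edge product is $\le\exp(Cn^{1/5}\log^5 n)$.

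For the bulk product, factor $\prod_{j=J_1}^{J_2-1}|q(h(\omega_j))|=\prod_j|h(\omega_j)-\alpha|\cdot\prod_j|h(\omega_j)-\beta|\cdot\prod_j|p(h(\omega_j))|$. The first two factors are bounded by $\exp(Cn^{1/5}\log^5 n)$ directly by Lemma~\ref{uproduct} with $\zeta=\alpha$ and $\zeta=\beta$ (both on $\partial\mathbb{D}$). For the third, I would exploit the shape of elements of $\mc{P}_n$ by writing $p(z)=\wt{p}(z)+z^{n^{1/5}}s(z)$, where $\wt{p}(z)=1-\sigma z^d$ and $s(z)=\sum_{l=0}^{n-n^{1/5}}c_{l+n^{1/5}}z^l$ has coefficients of modulus $\le 1$. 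For $j$ in the bulk, Lemma~\ref{hproperties} gives $|h(\omega_j)|\le 1-\tau_j$ with $\tau_j:=c_5\min(t_j,1-t_j)/\log^2(a^{-1})$ and $t_j=(j+\delta)/m$ (and $J_1/m\ge C_6a^{1/2}$ for $n$ large, so the lemma applies); hence $|s(h(\omega_j))|\le (1-|h(\omega_j)|)^{-1}\le\tau_j^{-1}$ and $|\wt{p}(h(\omega_j))|\ge 1-|h(\omega_j)|^d\ge\tau_j$. Thus $|p(h(\omega_j))|\le|\wt{p}(h(\omega_j))|(1+\epsilon_j)$ with $\epsilon_j\le\tau_j^{-2}e^{-n^{1/5}\tau_j}$, so
$$\prod_{j=J_1}^{J_2-1}|p(h(\omega_j))|\le\Big(\prod_{j=J_1}^{J_2-1}|\wt{p}(h(\omega_j))|\Big)\prod_{j=J_1}^{J_2-1}(1+\epsilon_j),$$
where the first product is $1$ if $\sigma=0$ and $\le\exp(Cn^{1/5}\log^5 n)$ by Lemma~\ref{tildepproduct} if $\sigma=1$ (valid since $d\le n^{1/5}$).

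The crux is the last factor $\prod(1+\epsilon_j)$, and this is where the choice $J_1\sim n^{1/5}\log^4 n$ does the work: for $j$ in the bulk $\tau_j\ge\tau_{J_1}$ and $n^{1/5}\tau_{J_1}=\tfrac{25}{4}\log^2 n$, so $\epsilon_j\le\tau_{J_1}^{-2}e^{-n^{1/5}\tau_{J_1}}\le n^{2/5-\frac{25}{4}\log n}$, which is super-polynomially small. Hence $\sum_{j=J_1}^{J_2-1}\epsilon_j\le m\,n^{2/5-\frac{25}{4}\log n}\to 0$, and $\prod_j(1+\epsilon_j)\le e^{\sum_j\epsilon_j}\le e$ for $n$ large. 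Multiplying the edge bound, the two applications of Lemma~\ref{uproduct}, Lemma~\ref{tildepproduct}, and this estimate yields the corollary. The only genuinely delicate point is this negligibility of the perturbation $z^{n^{1/5}}s(z)$ after composition with $h$: a naive termwise bound gives only $|p(h(\omega_j))|=O(1)$ on the bulk and hence $\exp(O(m))=\exp(O(n^{2/5}))$ overall, which is far too weak, so it is essential that the $\epsilon_j$ be so small that even $m\sim n^{2/5}$ of them sum to $o(1)$ — which is precisely what forces $J_1\gtrsim n^{1/5}\log^2 n$.
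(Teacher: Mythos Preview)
Your proof is correct and follows essentially the same route as the paper: split off the edge indices $E$, apply Lemma~\ref{uproduct} twice for the factors $z-\alpha$ and $z-\beta$, and handle $p$ on the bulk by comparing to its truncation $\wt{p}$ and invoking Lemma~\ref{tildepproduct}. The only (minor) differences are cosmetic: the paper uses the cruder edge bound $|q(h(\omega_j))|\le 4n$ rather than your $n^5$, and for the perturbation step the paper bounds $|p(h(\omega_j))-\wt{p}(h(\omega_j))|\le n e^{-c'\log^2 n}$ directly and then appeals to ``an easy argument given in \cite{chasesw}'' to pass from \eqref{ptildebound} to \eqref{pterms}, whereas you make this argument explicit via the multiplicative bound $|p|\le|\wt{p}|(1+\epsilon_j)$ with $\sum_j\epsilon_j=o(1)$.
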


\begin{proof}
Take $q \in \mc{Q}_n$; say $q(z) = (z-\alpha)(z-\beta)p(z)$ for $p \in \mc{P}_n$. For $j \in \{1,\dots,J_1-1\}$ and for $j \in \{J_2,\dots,m-2\}$, by Lemma \ref{unitdisk} we can bound $|q(h(e^{2\pi i \frac{j}{m}}z))| \le 4n$, to obtain \begin{equation}\label{smallj} \prod_{j \not \in \{J_1,\dots,J_2-1\}} |q(h(e^{2\pi i \frac{j+\delta}{m}}))| \le (4n)^{J_1-1+m-J_2-1} \le e^{Cn^{1/5}\log^5 n}.\end{equation} By applying Lemma \ref{uproduct} to $u(z) := z-\alpha$ and to $u(z) := z-\beta$ and multiplying the results, we see \begin{equation}\label{uterms} \prod_{j=J_1}^{J_2-1} |\overline{u}(h(e^{2\pi i \frac{j+\delta}{m}}))| \le e^{Cn^{1/5}\log^5 n},\end{equation} where $\overline{u}(z) := (z-\alpha)(z-\beta)$. Let $\wt{p}(z) \in \{1,1-z^d\}$ be the truncation of $p$ to terms of degree less than $n^{1/5}$. Then, since Lemma \ref{hproperties} gives $$|h(e^{2\pi i \frac{j+\delta}{m}})| \le 1-c_5\frac{\min\left(\frac{j}{m}+\delta,1-(\frac{j}{m}+\delta)\right)}{\log^2n} \le 1-c'n^{-1/5}\log^2n$$ for $j \in \{J_1,\dots,J_2-1\}$, we see \begin{equation}\label{approximation} \left|p\hspace{-.5mm}\left(h(e^{2\pi i \frac{j+\delta}{m}})\right)-\wt{p}\hspace{-.5mm}\left(h(e^{2\pi i \frac{j+\delta}{m}})\right)\right| \le ne^{-c'\log^2 n} \le e^{-c\log^2n}.\end{equation} Lemma \ref{tildepproduct} implies \begin{equation}\label{ptildebound} \prod_{j=J_1}^{J_2-1} |\wt{p}(h(e^{2\pi i \frac{j+\delta}{m}}))| \le e^{Cn^{1/5}\log^5 n}.\end{equation} By an easy argument given in \cite{chasesw}, \eqref{approximation} and \eqref{ptildebound} combine to give \begin{equation}\label{pterms} \prod_{j=J_1}^{J_2-1} |p(h(e^{2\pi i \frac{j+\delta}{m}}))| \le e^{C'n^{1/5}\log^5 n}.\end{equation} Combining $\eqref{smallj}, \eqref{uterms}$, and $\eqref{pterms}$, the proof is complete. 
\end{proof}

\vspace{1.5mm}

\begin{proposition}\label{eregionlowerbound}
For any $q \in \mc{Q}_n$, it holds that $\max_{w \in G_a} |q(w)| \ge \exp(-Cn^{1/5}\log^5 n)$. 
\end{proposition}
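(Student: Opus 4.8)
The plan is to compose $q$ with the polynomial $h$ and sample along a suitably shifted set of $m$-th roots of unity, where $m = c_4^{-1}n^{2/5}$ as in the lemmas above. The point is that the image under $h$ of the root of unity nearest $1$ already lies in $G_a$ by Lemma \ref{hproperties}, so it suffices to show that $q\circ h$ is not too small at that point; Corollary \ref{productbound} controls the product of $|q\circ h|$ over all the \emph{other} roots of unity, so the only missing ingredient is a lower bound on the product over \emph{all} of them.

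Concretely, set $P := q\circ h$, which is a polynomial since $q$ and $h$ are. Because $h(0)=0$ (it has no constant term), $p(0)=1$ for every $p\in\mc{P}_n$, and $\alpha\beta = e^{ia}e^{-ia}=1$, we have $P(0) = q(0) = \alpha\beta\,p(0) = 1$; in particular $P\not\equiv 0$, so $\log|P|$ is subharmonic and $\int_0^1 \log|P(e^{2\pi i u})|\,du \ge \log|P(0)| = 0$. Performing the change of variables $u = \frac{j+\delta}{m}$ turns this into
$$\int_0^1 \Big(\sum_{j=0}^{m-1}\log\big|P(e^{2\pi i\frac{j+\delta}{m}})\big|\Big)\,d\delta \;=\; m\int_0^1 \log|P(e^{2\pi i u})|\,du \;\ge\; 0,$$
so there is some $\delta^*\in[0,1)$ with $\prod_{j=0}^{m-1}\big|P(e^{2\pi i\frac{j+\delta^*}{m}})\big| \ge 1$.

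Now apply Corollary \ref{productbound} with this $\delta = \delta^*$ to get $\prod_{j\notin\{0,m-1\}}\big|P(e^{2\pi i\frac{j+\delta^*}{m}})\big| \le \exp(Cn^{1/5}\log^5 n)$. Dividing, $\big|P(e^{2\pi i\frac{\delta^*}{m}})\big|\cdot\big|P(e^{2\pi i\frac{m-1+\delta^*}{m}})\big| \ge \exp(-Cn^{1/5}\log^5 n)$, so one of these two edge factors, say $\big|P(e^{2\pi i t})\big|$, is at least $\exp(-Cn^{1/5}\log^5 n)$. Since $m = c_4^{-1}n^{2/5}$ and $a = n^{-2/5}$ give $\tfrac1m = c_4 a$, the two edge abscissae are $t = \tfrac{\delta^*}{m}\in[0,c_4a)$ and $t = -\tfrac{1-\delta^*}{m}\in(-c_4a,0]$, so in either case $|t|\le c_4 a$; hence Lemma \ref{hproperties} gives $w := h(e^{2\pi i t})\in G_a$, and $|q(w)| = |P(e^{2\pi i t})| \ge \exp(-Cn^{1/5}\log^5 n)$, as desired (after adjusting the constant).

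I expect no real obstacle here: essentially all the analytic work has been front-loaded into Corollary \ref{productbound} and Lemma \ref{hproperties}. The only genuinely new step is the averaging over the shift parameter $\delta$, which upgrades the mean-value inequality $\int_0^1\log|P(e^{2\pi i u})|\,du \ge \log|P(0)|$ into the existence of a good $\delta^*$; the points to watch are the trivial constant bookkeeping (taking "one of two factors" only costs a factor of $2$ in the exponent) and the verification that both edge abscissae satisfy $|t|\le c_4 a$ — which is exactly why $m$ was chosen to equal $c_4^{-1}a^{-1}$.
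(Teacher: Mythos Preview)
Your proof is correct and follows essentially the same route as the paper's. The only cosmetic difference is in how the full product $\prod_{j=0}^{m-1}|q(h(e^{2\pi i (j+\delta)/m}))|$ is shown to be $\ge 1$ for some shift: you use Jensen's inequality for $\log|P|$ together with an averaging/pigeonhole over $\delta$, whereas the paper packages the same product as an analytic function $g(z)=\prod_j q(h(e^{2\pi i j/m}z))$ and invokes the maximum modulus principle with $g(0)=1$; these are equivalent formulations of the same idea.
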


\begin{proof}
Let $g(z) = \prod_{j=0}^{m-1} q(h(e^{2\pi i \frac{j}{m}}z))$. For $z = e^{2\pi i \theta}$, with, without loss of generality, $\theta \in [0,\frac{1}{m})$, we have by Lemma \ref{hproperties} and Corollary \ref{productbound} $$|g(z)| \le \left(\max_{w \in G_a} |q(w)|\right)^2\prod_{j \not \in \{0,m-1\}} |q(h(e^{2\pi i (\frac{j}{m}+\theta)}))| \le \left(\max_{w \in G_a} |q(w)|\right)^2\exp(Cn^{1/5}\log^ 5n).$$ Thus, $\left(\max_{w \in G_a} |q(w)|\right)^2\exp(Cn^{1/5}\log^ 5n) \ge \max_{z \in \partial \mathbb{D}} |g(z)| \ge |g(0)| = 1$, where the last inequality used the maximum modulus principle (clearly $g$ is analytic). 
\end{proof}

\vspace{1.5mm}

The following lemma was proven in \cite{littlewoodcircle}. 

\vspace{1mm}

\begin{lemma}\label{hadamardregions}
Suppose $g$ is an analytic function in the open region bounded by $I_0$ and $I_a$, and suppose $g$ is continuous on the closed region between $I_0$ and $I_a$. Then, $$\max_{z \in I_{a/2}} |g(z)| \le \left(\max_{z \in I_0} |g(z)|\right)^{1/2}\left(\max_{z \in I_a} |g(z)|\right)^{1/2}.$$
\end{lemma}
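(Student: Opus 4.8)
The statement is a Hadamard three-lines / three-circles type inequality adapted to the family of arcs $I_t$. The plan is to conformally map the region bounded by $I_0$ and $I_a$ to a standard strip (or annulus) in such a way that the curves $I_t$ become the level lines $\{\Imag \zeta = t\}$ (or concentric circles), and then apply the classical Hadamard three-lines theorem. Concretely, the Möbius transformation $\varphi(z) = \frac{\alpha - z}{z - \beta}$ sends $\alpha \mapsto 0$ and $\beta \mapsto \infty$, and maps each arc $I_t = \{z : \arg \varphi(z) = t\}$ onto the ray $\{w : \arg w = t\}$ emanating from the origin. Thus $\varphi$ carries the region between $I_0$ and $I_a$ onto the sector $S_a = \{w : 0 < \arg w < a\}$, with $I_0, I_{a/2}, I_a$ going to the rays of argument $0, a/2, a$ respectively.

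Next I would compose with the logarithm: $\psi(z) = \log \varphi(z)$ maps the region onto the horizontal strip $\{\zeta : 0 < \Imag \zeta < a\}$, sending $I_t$ to the line $\Imag \zeta = t$. Then $G := g \circ \psi^{-1}$ is analytic on the open strip $\{0 < \Imag \zeta < a\}$ and continuous on its closure (using the hypothesis that $g$ is analytic on the open region and continuous on the closed region, and that $\psi$ is a homeomorphism of the closed region onto the closed strip). The classical Hadamard three-lines theorem — in the version that does not require $G$ to be bounded on the whole strip but only applies when one already knows the sup over the two boundary lines is finite and $G$ extends continuously, which holds here since the closed region is compact — gives
\[
\sup_{\Imag \zeta = a/2} |G(\zeta)| \le \left(\sup_{\Imag \zeta = 0} |G(\zeta)|\right)^{1/2}\left(\sup_{\Imag \zeta = a} |G(\zeta)|\right)^{1/2}.
\]
Translating back through $\psi$, the line $\Imag \zeta = t$ corresponds to $I_t$, and $\sup$ over $I_t$ of $|g|$ equals $\sup$ over $\Imag \zeta = t$ of $|G|$, which yields exactly the claimed inequality with maxima (the sups are attained because $I_0$, $I_{a/2}$, $I_a$ are compact arcs and $g$ is continuous on the closed region).

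The main obstacle is a technical one: verifying that the three-lines theorem applies in the form needed, i.e. that $G$ is bounded on the closed strip. This is immediate here because $\psi$ maps the \emph{closed} region between $I_0$ and $I_a$ (a compact set, since $I_0$ and $I_a$ are bounded arcs and $\alpha, \beta$ are interior to neither) homeomorphically onto the closed strip restricted to a bounded range of $\Real \zeta$ — in fact the images of $I_0$ and $I_a$ are bounded segments of their lines, not the full lines. So one should either (a) note that $g$ extends to a bounded continuous function on the closed strip after this identification and invoke the standard Hadamard three-lines theorem for bounded functions, or (b), cleanest of all, skip the strip entirely and apply the Hadamard three-\emph{circles} theorem: the map $z \mapsto \varphi(z)$ sends the region to a sector, and $\log|g \circ \varphi^{-1}|$ is then a subharmonic function of $\log|w|$... but in fact the right normalization is to use that $\arg w$, not $\log|w|$, is the relevant parameter, so the strip picture via $\log \varphi$ is the correct one. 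A careful writeup just needs to record that the closed region is compact (hence $g$ is bounded there), so that after transporting to the strip the hypotheses of the textbook three-lines lemma are met. Everything else is a routine change of variables.
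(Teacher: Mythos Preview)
The paper does not actually prove this lemma; it simply cites \cite{littlewoodcircle} (Borwein--Erd\'elyi). Your approach---conformally mapping the lune to a strip via $\psi=\log\varphi$ with $\varphi(z)=(\alpha-z)/(z-\beta)$ and then invoking the classical Hadamard three-lines theorem---is the standard argument and is essentially correct.

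There is, however, one false assertion in your last paragraph. You claim that $\psi$ maps the closed lune ``homeomorphically onto the closed strip restricted to a bounded range of $\Real\zeta$'' and that ``the images of $I_0$ and $I_a$ are bounded segments of their lines.'' This is not so: $\varphi$ sends the common endpoints $\alpha,\beta$ of all the arcs $I_t$ to $0$ and $\infty$, so $\varphi$ carries the open lune onto the \emph{full} sector $\{0<\arg w<a\}$, and $\psi$ carries it onto the \emph{full} infinite strip $\{0<\Imag\zeta<a\}$; the arcs $I_0$ and $I_a$ (with endpoints removed) go to the entire boundary lines. Fortunately this does not damage the argument. Since $g$ is continuous on the compact closed lune it is bounded there, and therefore $G=g\circ\psi^{-1}$ is bounded on the open strip and extends continuously to the closed strip (indeed $G(\zeta)\to g(\alpha)$ as $\Real\zeta\to-\infty$ and $G(\zeta)\to g(\beta)$ as $\Real\zeta\to+\infty$). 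That is exactly the hypothesis needed for the bounded-function version of the three-lines theorem, so your plan goes through once this one sentence is replaced by the correct (and simpler) justification.
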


\vs

\begin{proof}[Proof of Theorem \ref{arclowerbound}]
Take $f \in \mc{P}_n$, and let $g(z) = (z-\alpha)(z-\beta)f(z)$. A straightforward geometric argument yields $$|g(z)| \le \frac{|(z-\alpha)(z-\beta)|}{1-|z|} \le \frac{2}{\sin(a)} \le 3n^{2/5}$$ for $z \in I_0$. Letting $L = ||g||_{I_a}$, Lemma \ref{hadamardregions} then gives $$\max_{z \in I_{a/2}} |g(z)| \le (3Ln^{2/5})^{1/2}.$$ Since we then have $$\max_{z \in I_{a/2}\cup I_a} |g(z)| \le \max(L,(3Ln^{2/5})^{1/2}),$$ the maximum modulus principle implies $$\max_{z \in G_a} |g(z)| \le \max(L,(3Ln^{2/5})^{1/2}).$$ By Proposition \ref{eregionlowerbound}, we conclude $$\exp(-Cn^{1/5}\log^5 n) \le \max\left(L,(3Ln^{2/5})^{1/2}\right).$$ Thus, $$||f||_{I_a} \ge \frac{1}{4}||g||_{I_a} = \frac{L}{4} \ge \exp(-C'n^{1/5}\log^5 n),$$ as desired.
\end{proof}

\section{Appendix: Proof of Lemma \ref{unitdisk}}

We thank Fedor Nazarov for a simpler proof of Lemma \ref{unitdisk}, which we include below.

\begin{claim}\label{lemmaclaim}
Let $\mc{F}$ be a compact family of (uniformly) bounded real Lipschitz functions on $[0,1]$ such that $\int_0^{1/2} f < \int_{1/2}^1 f$ for every $f \in \mc{F}$. Then there exist $M, \ep > 0$ so that for all $m > M$, $m_* \in ((\frac{1}{2}-\ep)m, (\frac{1}{2}+\ep)m)$, and $f \in \mc{F}$, it holds that \begin{equation}\label{claimlemma1} \sum_{j=1}^{m_*} \frac{1}{\log^2(j+3)}f\left(\frac{j}{m}\right) < \sum_{j=m_*+1}^m \frac{1}{\log^2(j+3)}f\left(\frac{j}{m}\right).\end{equation}
\end{claim}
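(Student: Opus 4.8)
The plan is to reduce the discrete inequality \eqref{claimlemma1} to the strict integral inequality $\int_0^{1/2} f < \int_{1/2}^1 f$ by treating the sums as Riemann sums of an auxiliary integral, and then to handle uniformity over $\mathcal F$ and over $m_*$ near $m/2$ by compactness. First I would introduce, for each $f \in \mathcal F$ and each $m$, the step function $\phi_{f,m}$ on $[0,1]$ that on the interval $[(j-1)/m, j/m)$ takes the value $\frac{1}{\log^2(j+3)} f(j/m)$ (suitably scaled by $m$ so that $\frac1m\sum_j \frac{1}{\log^2(j+3)}f(j/m) = \int_0^1 \phi_{f,m}$). Since $\frac{1}{\log^2(j+3)} = \frac{1}{\log^2(mt+3)}(1+o(1))$ uniformly for $t$ in the relevant range and $f$ is Lipschitz with a uniform constant, $\phi_{f,m}(t)$ converges — after the trivial rescaling that removes the $\log$ factor — to $f(t)$ in a sense strong enough that partial integrals converge; more precisely, I would show
\begin{equation*}
\Bigl| \frac{1}{m}\sum_{j=1}^{s} \frac{\log^2(m/2+3)}{\log^2(j+3)} f\Bigl(\frac{j}{m}\Bigr) - \int_0^{s/m} f(t)\,dt \Bigr| \le \frac{\Lambda}{m} + o(1)
\end{equation*}
uniformly in $f \in \mathcal F$ and in $s \in [0,m]$, where $\Lambda$ bounds the Lipschitz constants and the sup norms on $\mathcal F$. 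The factor $\log^2(m/2+3)$ is a common positive multiplier of both sides of \eqref{claimlemma1}, so it may be inserted harmlessly; the point is that after inserting it the weights $\frac{\log^2(m/2+3)}{\log^2(j+3)}$ tend to $1$ uniformly for $j$ within a constant factor of $m/2$, and are bounded for all $j \le m$.

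Next I would exploit compactness of $\mathcal F$: the functional $f \mapsto \int_{1/2}^1 f - \int_0^{1/2} f$ is continuous on $\mathcal F$ (with respect to uniform convergence, say) and strictly positive by hypothesis, hence bounded below by some $2\eta > 0$ on the compact set $\mathcal F$. Then for $m$ large and $m_*$ within $\varepsilon m$ of $m/2$, the quantity $\frac1m \sum_{j > m_*} \frac{\log^2(m/2+3)}{\log^2(j+3)} f(j/m) - \frac1m\sum_{j \le m_*}\frac{\log^2(m/2+3)}{\log^2(j+3)} f(j/m)$ is, by the uniform Riemann-sum estimate above together with $\bigl|\int_0^{1/2} f - \int_0^{m_*/m} f\bigr| \le \Lambda |m_*/m - 1/2| \le \Lambda \varepsilon$, within $O(\Lambda\varepsilon) + O(1/m) + o(1)$ of $\int_{1/2}^1 f - \int_0^{1/2} f \ge 2\eta$. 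Choosing $\varepsilon$ small enough that $O(\Lambda \varepsilon) < \eta/2$ and then $M$ large enough that the $O(1/m)+o(1)$ terms are below $\eta/2$ makes this difference at least $\eta > 0$, which after multiplying through by $m/\log^2(m/2+3) > 0$ is exactly the strict inequality \eqref{claimlemma1}.

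The main obstacle is making the Riemann-sum comparison genuinely uniform over the (infinite-dimensional) family $\mathcal F$ simultaneously with the range of $m_*$: one must be careful that the error term depends on $f$ only through the uniform bounds (Lipschitz constant, sup norm) furnished by compactness, and that the replacement of $\frac{1}{\log^2(j+3)}$ by a constant multiple of $1$ is legitimate — for small $j$ the weights $\frac{\log^2(m/2+3)}{\log^2(j+3)}$ are large (up to $\sim \log^2 m$), so the naive bound on that part of the sum is not $o(1)$. To handle this I would split off the first, say, $\sqrt m$ indices and bound their contribution to the left side of \eqref{claimlemma1} crudely by $O(\sqrt m \cdot \|f\|_\infty)$, which after dividing by $m$ is $o(1)$; for the remaining $j \in [\sqrt m, m]$ the weight ratio is within $1 \pm o(1)$ of a quantity that is $O(1)$ near $j\approx m/2$ and in any case bounded, and the Riemann-sum argument goes through. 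The compactness hypothesis is exactly what converts "for each $f$, eventually" into "uniformly," so it should be invoked precisely at the point of extracting the lower bound $2\eta$ and the uniform Lipschitz/sup bounds.
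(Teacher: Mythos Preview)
Your approach is essentially the paper's: multiply through by a normalizing $\log^2$ factor, show that the resulting weights may be replaced by $1$ up to a uniform $o(1)$ error, reduce to an unweighted Riemann-sum comparison with the integral inequality, and use compactness of $\mathcal F$ to extract a uniform gap. One clarification: for $j\in[\sqrt m,m]$ the weight $\log^2(m/2+3)/\log^2(j+3)$ is \emph{not} uniformly $1+o(1)$ (it is about $4$ near $j=\sqrt m$), but your displayed estimate still holds because the set of $j$ with weight bounded away from $1$ has density $o(1)$ in $[1,m]$; the paper handles this more transparently by splitting at $m/\log^3 m$ and bounding $\frac1m\sum_j\bigl|\tfrac{1}{\log^2(j+3)}-\tfrac{1}{\log^2(m+3)}\bigr|$ directly as $o(1/\log^2 m)$.
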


\begin{proof}
By compactness, there exists $\ep > 0$ so that for all $\gamma \in (\frac{1}{2}-\ep,\frac{1}{2}+\ep)$ and all $f \in \mc{F}$, we have \begin{equation}\label{intdiffep} \int_0^\gamma f(x)dx < \int_\gamma^1 f(x)dx-\ep.\end{equation} Quickly note, for $C > 0$ a uniform upper bound on $\max_{x \in [0,1]} |f(x)|$, $f \in \mc{F}$, we have \begin{align}\label{quicknotee} \frac{1}{m}\sum_{j=1}^m \left[\frac{1}{\log^2(j+3)}-\frac{1}{\log^2(m+3)}\right]\left|f\left(\frac{j}{m}\right)\right| &\le C\frac{1}{m}\left[\sum_{j=1}^{\frac{m}{\log^3(m+3)}} 1+\sum_{j=\frac{m}{\log^3(m+3)}}^m \frac{\log\log(m+3)}{\log^3(m+3)}\right] \\ \nonumber &\le 2C\frac{\log\log(m+3)}{\log^3(m+3)} \\ \nonumber &= o(\frac{1}{\log^2(m+3)}) \end{align} as $m \to \infty$. As \eqref{claimlemma1} is equivalent to $$\frac{\log^2(m+3)}{m}\sum_{j=1}^{m_*} \frac{1}{\log^2(j+3)}f\left(\frac{j}{m}\right) < \frac{\log^2(m+3)}{m}\sum_{j=m_*+1}^m \frac{1}{\log^2(j+3)}f\left(\frac{j}{m}\right),$$ by \eqref{quicknotee} it suffices to prove \begin{equation}\label{heya} \frac{1}{m}\sum_{j=1}^{m_*} f\left(\frac{j}{m}\right) < \frac{1}{m}\sum_{j=m_*+1}^m f\left(\frac{j}{m}\right)-\frac{\ep}{2},\end{equation} say (for $m$ large enough and $m_* \in ((\frac{1}{2}-\ep)m,(\frac{1}{2}+\ep),m)$). But the LHS becomes arbitrarily close to $\int_0^{m_*/m} f(x)dx$, and the RHS becomes arbitrarily close to $\int_{m_*/m}^1 f(x)dx-\frac{\ep}{2}$, so \eqref{heya} is established by \eqref{intdiffep}. 
\end{proof}

Now, letting $f(x) = \frac{1}{2}-\frac{1}{2}\left(\frac{\sin(x/2)}{x/2}\right)^2$ for $x \in (0,1]$ and $f(0) = 0$, and then setting $f_c(x) = c^{-4}f(cx)$ for $c > 0$ and $x \in [0,1]$ and $f_0(x) = \frac{x^4}{24}$, we will apply Claim \ref{lemmaclaim} to the family $\mc{F} := \{f_c : c \in [0,C]\}$, for a suitable absolute $C > 0$. An easy computation shows that $\mc{F}$ is indeed a compact family of bounded Lipschitz functions. The condition that $\int_0^{1/2} f_c < \int_{1/2}^1 f_c$ for all $c \in [0,C]$ is equivalent to $\int_0^a f(x)dx < \int_a^{2a} f(x)$ for all $a > 0$, which is equivalent to $$\int_0^b \left(\frac{\sin x}{x}\right)^2dx > \int_b^{2b} \left(\frac{\sin x}{x}\right)^2dx$$ for all $b > 0$, which is easily verified\footnote{As $\frac{\sin x}{x}$ decreases on $[0,\pi]$, the case $b \le \frac{\pi}{2}$ is immediate. For $b > \frac{\pi}{2}$, we can do $\int_b^{2b} (\frac{\sin x}{x})^2dx < \int_{\pi/2}^\infty \frac{1}{x^2}dx = \frac{2}{\pi}$, which suffices since, by monotonicity, $\int_0^b (\frac{\sin x}{x})^2 dx > \int_0^{\pi/2} (\frac{\sin x}{x})^2 dx \ge \frac{\pi}{2}(\frac{2}{\pi})^2 = \frac{2}{\pi}$.}.

\begin{proof}[Proof of Lemma \ref{unitdisk}]
The proof of Lemma \ref{hproperties} shows that $\tilde{h}(e^{it}) \in \ol{D}$ if $t \in [-\pi,\pi]\setminus[-\frac{1}{100},\frac{1}{100}]$, say. So we may assume $|t| \le \frac{1}{100}$. First note that \begin{align}\label{jerry} \left|\Imag[\wt{h}(e^{it})]\right| &= \wt{\lam}_a\sum_{j=1}^r \ep_j d_j\sin(jt) \\ \nonumber &\le \wt{\lam}_a\sum_{j=1}^r d_j j |t| \\ \nonumber &\le 2|t|. \end{align} Also, \begin{align}\label{tom} \Real[\wt{h}(e^{it})] &= \wt{\lam}_a\sum_{j=1}^r \ep_j d_j\cos(jt) \\ \nonumber &\ge \wt{\lam}_a \sum_{j=1}^r \ep_j d_j \left(1-\frac{j^2t^2}{2}\right) \\ \nonumber &= 1-\frac{1}{2}t^2\wt{\lam}_a\sum_{j=1}^r \ep_j j^2 d_j \\ \nonumber &\ge 1-\frac{1}{2}t^2\wt{\lam}_a\cdot 21 \\ \nonumber &> 0.\end{align} Finally, using the identity $$\frac{\cos x-1+\frac{x^2}{2}}{x^2} = \frac{1}{2}-\frac{1}{2}\left(\frac{\sin(x/2)}{x/2}\right)^2,$$ we see that $$\Real[\wt{h}(e^{it})] = \wt{\lam}_a\left[\sum_{j=1}^{r_*} \frac{1}{\log^2(j+3)}\left(\frac{1}{j^2}-\frac{t^2}{2}\right)-\sum_{j=r_*+1}^r \frac{1}{\log^2(j+3)}\left(\frac{1}{j^2}-\frac{t^2}{2}\right)\right]$$ $$\hspace{15mm} +\wt{\lam}_ar^4t^6\left[\sum_{j=1}^{r_*} \frac{1}{\log^2(j+3)}f_{t r}(\frac{j}{r})-\sum_{j=r_*+1}^r \frac{1}{\log^2(j+3)}f_{t r}(\frac{j}{r})\right].$$ By Claim \ref{lemmaclaim}, we then see $$\Real[\wt{h}(e^{it})] \le \wt{\lam}_a\left[\sum_{j=1}^{r_*} \frac{1}{\log^2(j+3)}\left(\frac{1}{j^2}-\frac{t^2}{2}\right)-\sum_{j=r_*+1}^r \frac{1}{\log^2(j+3)}\left(\frac{1}{j^2}-\frac{t^2}{2}\right)\right],$$ which is at most $1-10t^2$ by our choice of $r_*$. Combining with \eqref{tom} and \eqref{jerry}, we see \begin{align*}\left|\wt{h}(e^{it})\right|^2 &= \left(\Real[\wt{h}(e^{it})]\right)^2 + \left(\Imag[\wt{h}(e^{it})]\right)^2 \\ &\le (1-10t^2)^2+4t^2 \\ &\le 1-6t^2 \\ & \le 1, \end{align*} as desired.
\end{proof}

\vs

\section{Acknowledgments} 

I would like to thank Shyam Narayanan for providing an extension to all $q \in (0,1)$, and Fedor Nazarov for an easier proof of Lemma \ref{unitdisk}.

\end{document}